\title{Delayed Interactions in Active Agents: Stability and Formations \thanks{Submitted to the editors DATE.
\funding{This publication has emanated from research jointly funded by Taighde Éireann – Research Ireland under Grant number FFP-A/12066.}}}
\author{Yu Wang%
  \footnotemark[6]
  \thanks{Institute of Mathematics, Humboldt-Universität zu Berlin, 10099 Berlin, Germany.}
  \and Andreas Amann%
  \footnotemark[6]
  \thanks{School of Mathematical Sciences, University College Cork, Cork T12 XF62, Ireland.   (\email{syanchuk@ucc.ie})}
  \and Jinde Cao%
  \thanks{School of Mathematics, Southeast University, 210096 Nanjing, China.}
  \and Jürgen Kurths%
  \footnotemark[6] 
  \thanks{Department of Physics, Humboldt-Universität zu Berlin, 10099 Berlin, Germany.}
  \and Serhiy Yanchuk%
  \footnotemark[3]
 \thanks{Potsdam Institute for Climate Impact Research, 14473 Potsdam, Germany.}
}
\begin{document}

\maketitle

\begin{abstract}
Active agents with time-delayed interactions arise naturally in various real-world systems, such as biological systems, transportation networks and robotic swarms.
Such systems are typically modeled as Delay Differential Equations (DDEs) that incorporate inertial effects.
In this paper, we investigate the stability of pattern formation of active agents with inertia and time delays, in both uncoupled and coupled scenarios.
We derive and analyze a high-dimensional linear DDE model that characterizes the stability of such formations.
Starting with the uncoupled scenario, where agents are driven only by a virtual leader, we describe the stability spectrum and provide conditions for the delay-independent (absolute) stability of the formations, as well as delay-dependent stability and unstable hyperbolic behavior. 
Different cases correspond to distinct universality classes of the corresponding spectrum.
For the coupled scenario, where agents are driven by both the virtual leader and inter-agent interactions, we consider both symmetric and non-symmetric coupling topologies.
Here we also provide an explicit spectrum classification, including the absolute stability criterion.
Additionally, we investigate interactions in the large-delay limit, where delays affect inter-agent coupling, while local feedback remains instantaneous.
In this limit, we prove rigorously that the stability region in the complex plane of the eigenvalues of the Laplacian matrix converges to a circle centered at the origin, a phenomenon previously observed in delay-coupled networks.
Our findings provide a universal framework for understanding stable formations and motions of active agents with delayed interactions.
\end{abstract}

%% main text
\section{Introduction}
\label{sec:Intro}
% Systems of active agents/agents (correct notion?). General description and references, especially those with the model we study.

% Systems of active agents with time-delayed interactions. State clearly that we have a high-dimensional linear system of DDEs, the stability of which determines the stability of the formation pattern.

% Time-delayed systems in general. Mention our recent results on absolute stability and classification. 

% Results of this paper.

Active agents possess the capability to move autonomously by perceiving their environment and information about interactions accordingly. 
They often exhibit collective behaviors when such agents interact with each other~\cite{shoham2008multiagent, sar2023flocking}.
Such systems are prevalent across many fields, ranging from bacterial colonies~\cite{shklarsh2011smart, sar2023flocking} and bird flocks~\cite{olfati2006flocking} to robotic swarms~\cite{ota2006multi,scholz2018inertial,lopez2019adaptive,paramanick2025spontaneous}. 
The processes of perception, feedback, and decision-making of information in these systems inherently involve time delays, resulting in delayed active agent systems.
Time delays significantly alter system dynamics, including behaviors such as oscillatory or chaotic motion, and complex behavior formation.
Active agents with time delays can be found in various applications, cf.~Refs.~\cite{Hale1993,xiao2008consensus,leonard2014multi,koh2016consensus,saberi2022synchronization,wang2024designing,wang2025intelligent,yao2025comal,Erneux2017} for a recent overview.  

In systems of active agents with time-delayed interactions, each agent adjusts its motion based on delayed information about its own state or the states of connected agents.
These interactions are commonly modeled using Delay Differential Equations (DDEs) with inertial effects, which emphasize the influence of past states on current dynamics~\cite{erneux2009applied,hindes2016hybrid,koh2016consensus,glass2021nonlinear,yao2025comal,yanchuk2017spatio,Soriano2013,krauskopf_bifurcation_2023}.
The inertial delay observed in active agents arises from the finite relaxation time in their motion, which deeply influences long-term dynamics and therefore typically requires consideration~\cite{scholz2018inertial,caprini2024dynamical}.
In this study, we focus on a class of models in which the dynamics of agents are governed by a high-dimensional linear system of DDEs, incorporating time delays into both local feedback and interactions. 
A key property of such systems is that the stability of collective formations can be explained by the stability of the linear high-dimensional DDE system.
By analyzing the spectrum of this DDE system and in particular by employing the Asymptotic Continuous Spectrum (ACS) technique \cite{Wolfrum2006,Yanchuk2015b,ruschel2021spectrum,Yanchuk2022b,wang2024universal}, we can determine the conditions whether a desired formation will remain stable, persists, or become unstable in the presence of time delays.

For a general linear time-delayed system, absolute stability guarantees stability for all delay values.
In Ref.~\cite{Yanchuk2022b}, we derived explicit criteria for the absolute stability in linear DDEs. 
Further development of these results in Ref.~\cite{wang2024universal} provides a universality classification of linear DDEs such that each class has the same sequence of stabilizing or destabilizing bifurcations when time delay is changing. 
Thus ACS is a useful tool for analyzing the stability and bifurcation behavior of DDEs. 
Originally introduced for DDEs with large delays~\cite{Giacomelli1996,Wolfrum2006, yanchuk2010multiple, Lichtner2011, Sieber2013, Yanchuk2015b, yanchuk2017spatio, Klinshov2017, ruschel2021spectrum, Yanchuk2022b, poignard2022self, wang2024universal}, the ACS provides a computationally rather basic framework for characterizing desctabilization scenarios.

In this study, we examine thoroughly two cases: a system of uncoupled agents driven only by a virtual leader (prescribed target function), and a coupled system of active agents not only driven by a virtual leader but also interacting with each other via a time delay.
We employ the ACS and the Master Stability Function (MSF) \cite{pecora1998master,Huddy2020,BOE20,ruschel_master_2025,Hart2015,flunkert2010synchronizing} approaches to provide theoretical analysis of these two cases.
Using the universal classification introduced in \cite{wang2024universal}, we provide explicit conditions for the agent formation to be: (i) stable for all delays (absolutely stable or universality class 0); (ii) to undergo an explicitly determined destabilization sequence as time delay increases (universality class I); (iii) to undergo explicitly determined destabilization and stabilization sequences as time delay increases (universality class II);  or (iv) unstable and hyperbolic for all time delays (universality class $U$). 
In particular, when delayed coupling is introduced, the stability of the formation can be described by a combination of the above cases.

This work is organized as follows.
In Section \ref{sec:Model_N}, we derive a high-dimensional linear DDE model that describes the dynamics of active agents. 
The stability of this model directly determines the stability of the resulting formations.
Section \ref{sec:Preliminaries} describes two analytical tools for studying formations of interacting agents: the master stability function and the asymptotic continuous spectrum.
Section \ref{sec:Uncoupled_Case} focuses on the uncoupled case, where agents are driven solely by the virtual leader. 
We identify four universality classes (0, I, II and U) of ACS of the high-dimensional DDE system and illustrate them with a bifurcation diagram. We then analyze the motion and pattern formation of active agents within each class.
Section \ref{sec:Coupled_Case} extends this analysis to the coupled case, where active agents are driven by a virtual leader and interact with others incorporating time delays. 
We then examine the motion and formation behaviors of the agents under both symmetric and asymmetric interaction topologies.
We show how the spectrum of the whole coupled system can be expressed as a combination of the spectral branches of types 0, I, II, and U.
Furthermore, we examine the large delay limit, in which delays are present only in the interactions.
Notably, in this limit, the stability region in the plane of the complex eigenvalues of the Laplacian (describing the coupling topology) asymptotically approaches a circle centered at the origin. 
This circular property of the ``master stability function'' has previously been observed in delay-coupled networks \cite{flunkert2010synchronizing}, but here we provide a first rigorous proof of this property.
Finally, Section \ref{sec:Conclusions} summarizes our main results and outlines potential directions for future research.

\section{The linear model of active agents}
\label{sec:Model_N}
We consider the following model of $N$ active agents 
\cite{oh2015survey,suzuki2016leader,han2017multi,scholz2018inertial, chen2019trajectory,li2021fuzzy, zhang2023optimal,wang2023multi, goh2023hydrodynamic,  xiao2024perception,guo2025distributed} with inertia and time-delayed control
\begin{align} 
\label{eq:Motion_AP_a}
\mathbf{\dot R}(t)  
        &=\mathbf{V}(t),
\\ \label{eq:Motion_AP_b}
\mathbf{\dot V}(t)  
        &=\mathbf{U}\left( \mathbf{R}(t),\mathbf{V}(t),\mathbf{R}(t-\tau),\mathbf{V}(t-\tau),t\right), 
\end{align} 
where $\mathbf{R}: \mathbb{R} \to \mathbb{R}^{3N}$ is the position,  
$\mathbf{R}(t) = \left(R_1(t),\dots,R_N(t)\right)$,  $R_i(t)\in \mathbb{R}^3$, 
$\mathbf{V} : \mathbb{R} \to \mathbb{R}^{3N}$ is the velocity,  
$\mathbf{V}(t) = \left(V_1(t), \dots,V_N(t)\right)$, $V_{i}(t)\in \mathbb{R}^3$ of moving agents. 
The system moves due to the control input force $\mathbf{U} : \left(\mathbb{R}^{3N}\right)^4\times\mathbb{R} \to \mathbb{R}^{3N}$, 
$\mathbf{U} = \left(U_1, \dots,U_N\right)$, $U_{i}\in \mathbb{R}^3$. 

The main goal of the control force $\mathbf{U}$ is to achieve a desired motion and formation of agents. 
More specifically, the target trajectory $R_i(t)$ of agent $i$  is prescribed by a time-dependent function $R_0(t) +s_i $, where  $s_i \in \mathbb{R}^3$ determines the position of agent $i$ in the formation.
Hence, the vector $\mathbf{s}=\left(s_1,\dots,s_N\right)\in \mathbb{R}^{3N}$ determines the desired formation shapes of active agents. 
By denoting 
$\mathbf{R_T} = \left(R_0(t),\dots, R_0(t)\right) + \mathbf{s} \in \mathbb{R}^{3N}$,  the control goal is to find conditions on the control force $\mathbf{U}$ such that  $\mathbf{R}(t)\to \mathbf{R_T}(t)$ asymptotically and exponentially fast with $t\to \infty$. 
We assume that $R_0(t)$ (and hence $\mathbf{R_T}(t)$) is a two times continuously differentiable function.

We define the error variables for the deviations of the positions $\mathbf{e}(t)=\left(e_1(t),\dots,e_N(t)\right)$ and velocities $\boldsymbol{\xi}(t)=\left(\xi_1(t),\dots,\xi_N(t)\right)$ from the target functions as
\begin{align}
\label{Eq:Error_P}
\mathbf{e} 
        & =\mathbf{R} - \mathbf{R_T} =\mathbf{R} - \left(\mathrm{1}_N \otimes R_0(t)+ \mathbf{s}\right),
\\ \label{Eq:Error_V}
\boldsymbol{\xi} 
         & =\mathbf{\dot R} - \mathbf{\dot R_T} = \mathbf{V} - \mathrm{1}_N \otimes \dot R_0(t).
\end{align}
Here $\otimes$ denotes the Kronecker product and $\mathrm{1}_N$ is the $N$-dimensional vector with identical elements equal to 1.

We consider the Proportional Derivative (PD) controller with delayed terms, as studied in Refs.~\cite{tomei1991adaptive,johnson2005pid,hamamci2010calculation, liu2021pd,hernandez2019practical, ma2021delay,wang2023pd}.
Adding delay terms to the standard PD controller can improve its response characteristics and ensures better performance.
The considered PD controller has the following form:
\begin{align}\nonumber
U_{i} =
        &-\sum^N_{j=1,j\neq i}a_{ij}\Bigl\{  k \left[ \left(R_i(t)-R_j(t)\right) -\left(s_i-s_j\right)\right]
\\ \nonumber
        &+ k^\tau \left[\left(R_i(t-\tau)-R_j(t-\tau)\right)-\left(s_i-s_j\right)\right]
\\ \nonumber
         &+h \left[V_i(t)-V_j(t)\right]+ h^\tau \left[V_i(t-\tau)-V_j(t-\tau)\right]\Bigr\}
\\ \nonumber 
         &-k_0\left[R_i(t)-\left(R_0(t)+s_i\right)\right]-k^\tau_0\left[R_i(t-\tau)-\left(R_0(t-\tau)+s_i\right)\right]
\\ \nonumber 
          &-h_0\left[V_i(t)-V_0(t)\right]-h^\tau_0\left[V_i(t-\tau)-V_0(t-\tau)\right]+ U_0(t), 
\\  \label{eq:ControlInput2} 
          &\quad i=1,2,\ldots, N.
\end{align}
The coefficients $a_{ij}$, $j\ne 0$ determine the coupling between the agents $i$ and $j$; $a_{ij}> 0$ if the information is transmitted from $j$ to $i$, otherwise, $a_{ij}=0$.
$k$, $k^\tau$, $h$, $h^\tau$ are the corresponding control gains between the agents.   
$k_0$, $k^\tau_0$, $h_{0}$, and $h^\tau_{0}$  determine the coupling between the agents and the virtual leader. $V_0(t) = \dot R_0(t)$ and $U_0(t)=\ddot R_0(t)$ are the velocity and acceleration of the ``virtual leader'', i.e. a prescribed position in space that should be followed by the active agents.

Using the error variables \eqref{Eq:Error_P} and \eqref{Eq:Error_V},  the considered PD controller reads
\begin{align}\nonumber
U_{i} =
        &-\sum^N_{j=1,j\neq i}a_{ij}\Bigl\{k \left[e_i(t)-e_j(t)\right] +k^\tau \left[e_i(t-\tau)-e_j(t-\tau)\right]
\\ \nonumber&\quad\quad\quad\quad\quad\quad
        +h\left[\xi_i(t)-\xi_j(t)\right]+ h^\tau\left[\xi_i(t-\tau)-\xi_j(t-\tau)\right]\Bigr\} 
\\
        & - k_{0} e_i(t) - k^\tau_{0}e_i(t-\tau)- h_{0}\xi_i(t)- h^\tau_{0}\xi_i(t-\tau)+ U_0(t). 
\label{eq:ControlInput}
\end{align}
In vector form, the controller is expressed as
\begin{align}\nonumber
\mathbf{U} = 
        &-\left( \mathbf{L}\otimes \mathrm{1}_3 \right) \left[k\mathbf{e}(t)+ k^\tau\mathbf{e}(t-\tau)+h\boldsymbol{\xi}(t)+h^\tau\boldsymbol{\xi}(t-\tau)\right]
\\ 
        & - k_{0} \mathbf{e}(t) - k^\tau_{0}\mathbf{e}(t-\tau)- h_{0}\boldsymbol{\xi}(t)- h^\tau_{0}\boldsymbol{\xi}(t-\tau)+ \mathbf{U}_0(t),
\label{eq:controller}
\end{align}
where $\mathbf{L}$ is the Laplacian matrix 
\begin{align}
\label{eq:L}
\left[ \mathbf{L} \right]_{ij}  = 
        \left\{\begin{array}{cc} \sum\limits^N_{l=1,l\ne i} a_{il},  &i=j; 
\\
        - a_{ij},  & i\ne j; \quad i=1,2,\cdots N,
        \end{array}\right. 
\end{align} 
and $\mathbf{U}_0(t) = 1_N\otimes U_0(t)$.

Finally, using the relations $\mathbf{\dot e} = \mathbf{\dot R} - \mathbf{\dot R_T}$ and $\boldsymbol{\dot \xi} = \mathbf{\dot V} - \mathrm{1}_N \otimes \ddot R_0(t) = \mathbf{U} - \mathbf{U}_0(t)$,
we obtain the following autonomous linear delay-differential equation describing the error dynamics
\begin{align}
\mathbf{\dot e} = 
        & \, \boldsymbol{\xi}, 
\label{eq:DDE-e}
\\ 
\boldsymbol{\dot \xi} = 
        & -\left( \mathbf{L}\otimes \mathrm{1}_3 \right)  \left[k\mathbf{e}(t)+ k^\tau\mathbf{e}(t-\tau)+h\boldsymbol{\xi}(t)+h^\tau\boldsymbol{\xi}(t-\tau)\right] \nonumber 
\label{eq:DDE-xi}
\\
        &- k_{0} \mathbf{e}(t) - k^\tau_{0}\mathbf{e}(t-\tau)- h_{0}\boldsymbol{\xi}(t)- h^\tau_{0}\boldsymbol{\xi}(t-\tau).
\end{align}
The system defined by equations \eqref{eq:DDE-e}--\eqref{eq:DDE-xi} governs the stability of the agent formation and the tracking of the virtual leader orbit.
This will be the main object of this study. 

System \eqref{eq:DDE-e}--\eqref{eq:DDE-xi} can be rewritten in the following form
\begin{align}\nonumber
\dot Z(t)=& 
        \left[ \left(\mathbf{1}_N \otimes M -\textbf{L}\otimes P\right) \otimes \mathrm{1}_3\right] Z(t)  \\&+ \left[ \left(\mathbf{1}_N\otimes M^\tau-\textbf{L}\otimes P^\tau\right) \otimes \mathrm{1}_3 \right] Z(t-\tau), 
\label{eq:Gener_MSF}
\end{align}
where $Z(t)=[e_1,\xi_1\cdots,e_{N},\xi_{N}]^T\in \mathbb{R}^{ N\times 2 \times 3}$ is the state vector.
More precisely, $Z$ is a rank three tensor defined by $Z_{i1j} = e_{ij}$ and $Z_{i2j} = \xi_{ij}$ for $i=1,\dots,N$ and $j=1,2,3$. 
Matrices $M,~M^\tau$, $P,~P^\tau\in \mathbb{R}^{2\times 2}$ are given by 
\begin{equation}
\label{eq:MP}
\begin{aligned}
 M=&\left[\begin{array}{cccc}
		0& 1\\
		-k_0& -h_0
	\end{array}\right],~
 M^\tau=\left[\begin{array}{cccc}
		0& 0\\
		-k^\tau_{0}& -h^\tau_{0}
	\end{array}\right],\\
 P=&\left[\begin{array}{cccc}
		0& 0\\
		k& h
	\end{array}\right],~~~~~~~~
 P^\tau=\left[\begin{array}{cccc}
		0& 0\\
		k^\tau& h^\tau
	\end{array}\right].
\end{aligned}
\end{equation}
The connectivity structure is given by the Laplacian matrix $\mathbf{L}\in \mathbb{R}^{N\times N}$ with zero row sum.
For completeness,  equations \eqref{eq:Gener_MSF} can be written using explicit indices and the Einstein sum convention as follows
\begin{align*}
\dot Z_{ijs} =
         (\delta_{im} M_{jl}  - [\mathbf{L}]_{im}P_{jl} )Z_{mls}(t) + (\delta_{im}M^{\tau}_{jl}  - [\mathbf{L}]_{im}P^{\tau}_{jl}) Z_{mls}(t-\tau).
\end{align*}

\section{Preliminaries}
\label{sec:Preliminaries}
This section introduces two tools that will be used for studying the pattern formation of interacting agents: the Master Stability Function (MSF) and the Asymptotic Continuous Spectrum (ACS) approach for delay systems.

\subsection{Master Stability Function for delay-coupled systems \label{sec:MSF}}

We assume that the  Laplacian matrix $\mathbf{L}$ can be diagonalized such that 
\begin{align*}
\mathbf{H}^{-1} \mathbf{L} \mathbf{H} = 
    \boldsymbol{\Lambda} = 
        {\mathrm{diag}}\left\{\lambda_1,\cdots,\lambda_N\right\},
\end{align*}
where $\lambda_{\ell}$ are the eigenvalues of $\mathbf{L}$. 
In this case, system \eqref{eq:Gener_MSF} can be block diagonalized, similarly to the MSF approach in Ref.~\cite{pecora1998master}. 
We define the new variable $X(t)$ using $X(t)=\left(\mathbf{H}\otimes {1_2}\otimes 1_3\right)^{-1}Z(t)$. 
Then, Eq.~\eqref{eq:Gener_MSF} has the block diagonal form with respect to the new variable 
\begin{align}
\nonumber
\dot X(t)= 
        &\left[ \left(\mathbf{1}_N \otimes M -\boldsymbol{\Lambda}\otimes P\right) \otimes \mathrm{1}_3\right] X(t) 
\\
         & + \left[ \left(\mathbf{1}_N\otimes M^\tau-\boldsymbol{\Lambda}\otimes P^\tau\right) \otimes \mathrm{1}_3 \right] X(t-\tau).
\label{eq:Gener_MSF2}
\end{align}
This block structure leads to the set of $N$ independent equations 
\begin{align}
\label{Eq:eq:Gener_MSF}
\dot x_\ell(t)= 
        M x_\ell(t)+M^\tau x_\ell(t-\tau)-\lambda_\ell\left[Px_\ell(t)+P^\tau x_\ell(t-\tau)\right],
\end{align}
where $x_\ell(t) \in\mathbb{R}^{2},~\ell=1,2,\dots,N$.
Since the set of equations \eqref{Eq:eq:Gener_MSF} only differs by the parameter $\lambda_\ell$, the stability problem is reduced to a single two-component delay equation 
\begin{align}
\label{Eq:eq:Gener_MSF-1}
\dot x(t)= 
        (M -\lambda P) x(t)+(M^\tau -\lambda P^\tau) x(t-\tau),
\end{align}
where $x(t) \in\mathbb{R}^{2}$ and the parameter $\lambda$ takes the values $\lambda_\ell$ of the eigenvalues of the Laplacian matrix $\mathbf{L}$. 
System \eqref{Eq:eq:Gener_MSF-1} allows a separation of the stability problem and the coupling topology given by $\mathbf{L}$. Solving the stability problem for \eqref{Eq:eq:Gener_MSF-1} with $\lambda$ as a parameter provides stability conditions for an arbitrary coupling topology. 

\subsection{Delay-independent classification of linear DDEs \label{sec:ACS}}
Using the notations $A=M-\lambda P$ and $B=M^\tau-\lambda P^\tau$, equation \eqref{Eq:eq:Gener_MSF-1} can be rewritten in a more general form as a linear DDE with one delay
\begin{align}
\label{eq:linearDDE}
        \dot x(t)=A x(t)+B x(t-\tau).
\end{align}
The corresponding characteristic equation determining the stability and eigenvalues is 
\begin{equation}
\label{eq:chareq}
        \det \left[ \mu I - A - B e^{-\mu\tau} \right] = 0.
 \end{equation}
 
To study the stability properties of \eqref{eq:linearDDE} and the roots of the characteristic equation \eqref{eq:chareq} as a function of time delay $\tau$, we will use the methods developed in \cite{Yanchuk2022b} and \cite{wang2024universal}. 
These methods introduce classes of DDEs, which are either stable for all time delays or undergo different but universal destabilization scenarios. To classify these DDEs, we will introduce the concepts of the generating polynomial and the asymptotic continuous spectrum.
\begin{definition}
\cite[Definition 2.3] {wang2024universal}  
    The  function 
        \begin{align}
            \chi_\omega(Y) := \det\left[i\omega I-A-B Y \right]
        \label{eq:Y}
        \end{align}
is called the \textbf{generating polynomial}.  $\chi_\omega(Y)$ is a polynomial with respect to both $Y\in \mathbb{C}$ and $\omega\in \mathbb{R}$.
    We also denote the roots of the generating polynomial as $Y_{j}(\omega)$, i.e.  $\chi_\omega(Y_j(\omega))=0$, where $j=1,\dots,m$.    
    $m$ is the rank of matrix $B$.
\end{definition}
Using the roots of the generating polynomial, we introduce the following concept of the \textit{asymptotic continuous spectrum}.
\begin{definition}
\cite[Definition 2.4]{wang2024universal}
\label{def:1}
    The \textbf{Asymptotic Continuous Spectrum (ACS)} is given by
	\begin{align}
			 & \Lambda_{\text{ACS}} :=  \left\{ \frac{1}{\tau}\gamma_j(\omega) + i\omega\in\mathbb{C}\ :\ \omega \in \mathbb{R}, \quad j=1,\dots,m\right\}, \label{eq:ACS-complex}\\
			& \text{where} \quad \gamma_{j}(\omega)  =-\ln\left|Y_{j}(\omega)\right|.\label{eq:ACS}
	\end{align}
\end{definition}
The ACS is shown to approximate the spectrum of DDEs of the form \eqref{eq:linearDDE} for large delays \cite{Lichtner2011,Sieber2013}. 
It consists of $m$ continuous curves in $\mathbb{C}$ in general.
In our case, we have $B=M^\tau-\lambda P^\tau$,  $\textnormal{rank}\, B =1$ and there is only one ACS curve.
%, which have different shape for classes 0, I, and II, as shown in Fig.~\ref{Fig:ACSs}. 
\begin{figure}
\centering   
\includegraphics[width=12cm]{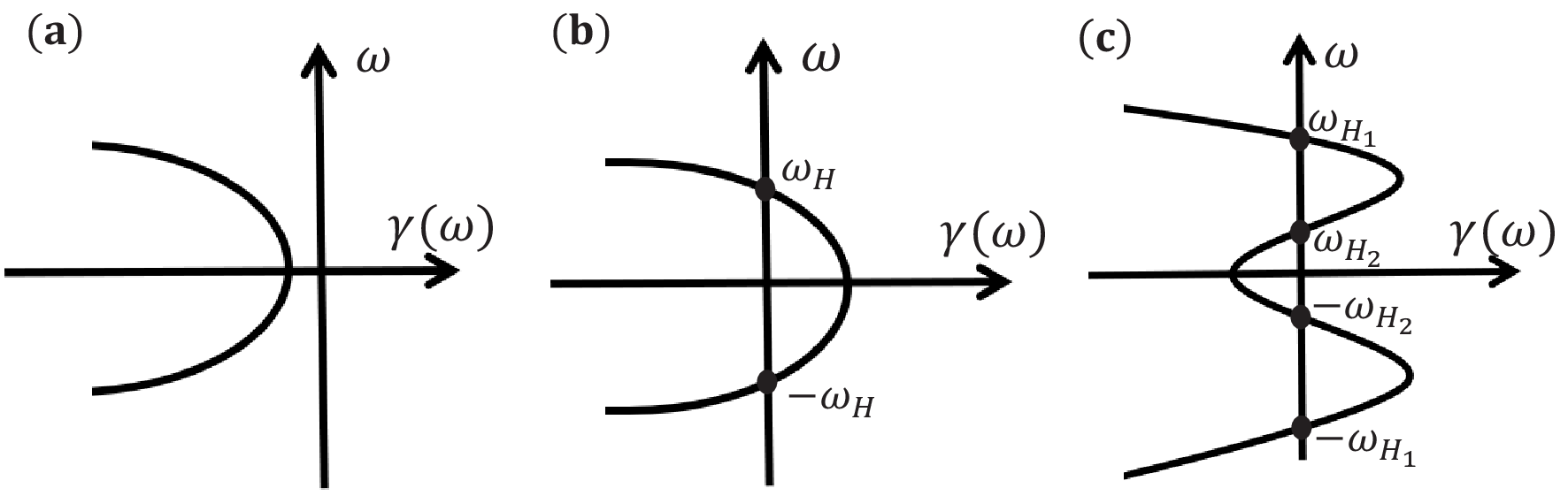}
    \caption{Schematic structure of universality classes 0--II asymptotic continuous spectrum: (a) universality class 0, (b) universality class I, (c) universality class II.
\label{Fig:ACSs}}
\end{figure}

Furthermore, we introduce the following classification of the ACS, which is closely related and will lead to the appropriate classification of the DDEs \cite{wang2024universal}:

\textit{Class 0 ACS} shows no crossing with the imaginary axis, see Fig.~\ref{Fig:ACSs}(a). 
%If additionally, the DDE \eqref{eq:linearDDE} is stable for $\tau=0$ then it is stable for all non-negative delays \cite{Yanchuk2022b}. 

\textit{Class I ACS} has two points $\pm i \omega_H$ where it crosses the imaginary axis, see Fig.~\ref{Fig:ACSs}(b). 
% move it down
%, and the destabilization time delays are given by \eqref{eq:classIbif} with 
% \begin{align*}
%         \phi_{H}=-\arg\left[Y_1(\omega_{H})\right],
% \end{align*}
% where $Y_1$ corresponds to the most unstable branch of ACS. 

\textit{Class II ACS} exhibits four crossing points $\pm i \omega_{H_1}$ and $\pm i \omega_{H_2}$ as shown in Fig.~\ref{Fig:ACSs}(c).

% The destabilization scenario are given by \eqref{eq:classIIbifDest} and \eqref{eq:classIIbifStab} with 
% \begin{align*}
%         \phi_{H_m}=-\arg\left[Y_j(\omega_{H_m})\right],~m=1,2,
%  \end{align*}
% see \cite{wang2024universal}. 

Additionally, a strongly unstable spectrum $\Lambda_U$ is defined as a set of eigenvalues of matrix $A$ with positive real parts~\cite{Lichtner2011}. 
Systems with a nonempty strongly unstable spectrum are always unstable for sufficiently large delays. 
%This classification as strongly unstable is independent of the form of the  ACS. 
Moreover, such systems become stable for small delays if the matrix $A+B$ is stable.

Based on the above discussion, we now introduce the concepts of absolute stability and universality classes I, II, and U of DDEs \eqref{eq:linearDDE} to further describe the stability and various forms of instabilities of this system, see more details and proofs in \cite{Yanchuk2022b,wang2024universal}.
\begin{definition}
\cite[Definition 1]{Yanchuk2022b}
    System \eqref{eq:linearDDE} is \textbf{absolutely stable} if all roots $\mu$ of the characteristic equation \eqref{eq:chareq} possess negative real parts $\Re(\mu) < 0$ for all $\tau\geq 0$. 
\end{definition}
Accordingly to \cite{Yanchuk2022b}, system \eqref{eq:linearDDE} is absolutely stable iff the ACS is of class 0 and $A+B$ is stable. 
\begin{definition}
\cite[Definition 4.1]{wang2024universal}
    We define the system \eqref{eq:linearDDE}  to be of \textbf{universality class I} if its ACS is of class I.
    % if there exist $\tau_0>0$ such that roots $\mu$ of the characteristic equation \eqref{eq:chareq} possess negative real parts $\Re(\mu) < 0$ for all $\tau\in[0,\tau_0)$ and positive real parts $\Re(\mu) > 0$ for all $\tau\in[\tau_0,+\infty)$.
\end{definition}
The destabilization time delays for DDEs of class I are given by \cite{wang2024universal} 
    \begin{equation}
        \label{eq:classIbif-n}
        \tau_j = \frac{1}{\omega_H}(\phi_H +2\pi j),\quad j=0,1,\dots
    \end{equation}
with $\phi_{H}=-\arg\left[Y(\omega_{H})\right] \in [0,2\pi]$
% \textcolor{red}{(AA: the minus sign in the previous equation is not clear to me.  What is the range of arg?)}
% \textcolor{blue}{(YW: We denote $\phi=\omega\tau$ and $Y=e^{-i\phi}$, this leads to the minus sign in the previous equation.)}
and $\omega_H>0$, which are independent on $\tau$.
Here $Y$ is defined by \eqref{eq:Y}. 
Hence, there are two stability scenarios of class I DDEs: (i) stability  for $0 \le \tau<\tau_0$ and instability for $\tau>\tau_0$ if $A+B$ is stable; and (ii) instability for all $\tau\ge 0$ if $A+B$ is unstable. Note that in case (ii) the DDE is also unstable for all positive time delays. 

\begin{definition}
\cite[Definition 5.1]{wang2024universal}
    We define the system \eqref{eq:linearDDE} to be of \textbf{universality class II} iff its ACS is of universality class II.  
    % there exist $0<\tau_0^{(1)}<\tau_0^{(2)}$ such that roots $\mu$ of the characteristic equation \eqref{eq:chareq} possess negative real parts $\Re(\mu) < 0$ for $\tau\in[0,\tau_0^{(1)})$ and positive real parts $\Re(\mu) > 0$ for  $\tau\in[\tau_0^{(1)},\tau_0^{(2)}]$, for $\tau>\tau_0^{(2)}$ positive and negative real part alternate, and for sufficiently large $\tau$  there has $\Re(\mu) > 0$.
\end{definition}
% \textcolor{red}{(AA: does this definition apply for multiple branches in the ACS?)}
% \textcolor{blue}{(YW: Yes, in Ref.~\cite{wang2024universal}, we have detailed definition of Class II ACS, and include multiple branches.)}
    The class II DDEs possess a sequence of destabilizing transitions \cite{wang2024universal} at 
    \begin{equation}
    \label{eq:classIIbifDest}
        \tau_j^{(1)} = \frac{1}{\omega_{H_1}}(\phi_{H_1} +2\pi j),
    \end{equation}
    and stabilizing transitions at 
    \begin{equation}
    \label{eq:classIIbifStab}
        \tau_{j}^{(2)} = \frac{1}{\omega_{H_2}}(\phi_{H_2} +2\pi j),
    \end{equation}    
    as time-delay varies, where $0<\omega_{H_2}<\omega_{H_1}$, and $\phi_{H_i}=-\arg\left[Y(\omega_{H_i})\right]$ are delay-independent.
     Moreover, the regions of instability and stability can alternate (known as stability islands \cite{DHuys2008,Huddy2020,Daniel2022a}), and for sufficiently large $\tau$ the system is unstable.
\begin{definition}
    System \eqref{eq:linearDDE} is said to be of  \textbf{universality class $U$},  if for all $\tau > 0$, the characteristic equation \eqref{eq:chareq} has at least one root $\mu$ with a positive real part $\Re(\mu)>0$ (unstable) and no roots occur with $\Re(\mu)=0$ (hyperbolic).
\end{definition}
According to \cite{Yanchuk2022b}, the criterion  for a linear DDE system to be of class $U$ is: ACS of type 0 and unstable $A$. In this case, the system remains unstable and hyperbolic for all positive $\tau$.

\section{The case of uncoupled agents}\label{sec:Uncoupled_Case}

\subsection{Characteristic equation, weak and strong spectrum}
We first analyze the case where all active agents receive information only from the virtual leader, which we call the case of uncoupled agents, see Fig.~\ref{Fig:example1}. 
\begin{figure}
\centering  
\includegraphics[width=1.5in,height=1.8in]{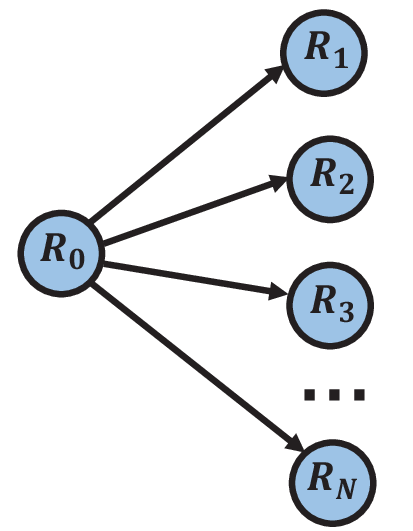}
    \caption{Schematic diagram of the uncoupled system.
    Each agent $R_1,\dots,R_N$ receives the input from the virtual leader $R_0$.
\label{Fig:example1}}
\end{figure}
When the coupling is absent, we have $\mathbf{L}=\mathbf{0}$ and the control function \eqref{eq:controller} reads
\begin{align}
\mathbf{U} = 
        - k_{0} \mathbf{e}(t) - k^\tau_{0}\mathbf{e}(t-\tau)- h_{0}\boldsymbol{\xi}(t)- h^\tau_{0}\boldsymbol{\xi}(t-\tau)+ \mathbf{U}_0(t),
\label{Eq:U_1}
\end{align}
hence system \eqref{eq:DDE-e}--\eqref{eq:DDE-xi} reduces to the following form
\begin{align}
\mathbf{\dot e} = 
        & \, \boldsymbol{\xi}, 
\label{eq:DDE_e0}
\\
\boldsymbol{\dot \xi} = 
        & - k_{0} \mathbf{e}(t) - k^\tau_{0}\mathbf{e}(t-\tau)- h_{0}\boldsymbol{\xi}(t)- h^\tau_{0}\boldsymbol{\xi}(t-\tau).
\label{eq:DDE_xi0}
\end{align}
This results in a set of uncoupled equations, each corresponding to an individual agent $i$, where $i=1,\dots,N$:
\begin{align}
\label{eq:error_example1}
\begin{array}{l}
    \dot e_i(t)=
        \xi_i(t),
    \\
    \dot \xi_i(t)=
        -k_{0}e_i(t)-k^\tau_{0}e_i(t-\tau)-h_{0}\xi_i(t)-h^\tau_{0}\xi_i(t-\tau).
\end{array}
\end{align}
Since the equations for all $i$ are identical, we can drop the index $i$.
In the vector form, the error system becomes
\begin{align}
\label{Eq:error-uncoupled}
        \dot x(t)= M x(t)+ M^\tau x(t-\tau)
\end{align}
with $M$ and $M^\tau$ are defined by \eqref{eq:MP} and $y=(e_i,\xi_i)$.
The stability of the error system \eqref{Eq:error-uncoupled} is governed by the characteristic equation
\begin{align}
\label{eq:charac_1}
        \mu^2+h_0\mu+h^\tau_0\mu e^{-\mu\tau}+k_0+k^\tau_0 e^{-\mu\tau}=0.
\end{align}

The associated ACS curve described by \eqref{eq:ACS-complex} (refer to Sec.~\ref{sec:Preliminaries}) reduces to a single expression for $\gamma(\omega)$, given by:
\begin{align}
\label{eq:ACS_1}
\gamma(\omega)=
        -\frac{1}{2}\ln\left[\frac{\left(\omega^2-k_0\right)^2+\left(\omega h_0\right)^2}{\left(k^\tau_0\right)^2+\left(\omega h^\tau_0\right)^2}\right].
\end{align}

The strongly unstable spectrum $\Lambda_U$ of system \eqref{eq:error_example1} is then given as the unstable spectrum of the instantaneous part of \eqref{eq:error_example1}, which are the roots of the characteristic equation
\begin{align}
\label{eq:F_mu}
\det \left[ \mu\, \mathrm{I} -M  \right] = 
    \det \left[\begin{array}{ccc}
		\mu &-1\\
		k_0&\mu+h_0
    \end{array}\right]=
        \mu^2+h_0\mu+k_0=0
\end{align}
with positive real parts. 

\subsection{Stability diagrams for uncoupled agents}\label{subsec3_1}
In this section, we obtain stability conditions for system \eqref{eq:DDE_e0}--\eqref{eq:DDE_xi0} (equivalently, for \eqref{Eq:error-uncoupled}), which include delay-independent classifications, as well as diagrams for specific time delays.

The following theorem gives an explicit delay-independent classification.
To shorten the notations, we denote the set of parameters of system \eqref{eq:DDE_e0}--\eqref{eq:DDE_xi0} as $p_0:=(k_0,h_0,k_0^\tau,h_0^\tau)$.

\begin{theorem}
\label{The_4-1}
    The error time-delay system  \eqref{eq:DDE_e0}--\eqref{eq:DDE_xi0} has the following delay-independent classification:
%\begin{enumerate}
  %\item  [{\textnormal{(i)}}]  
  
  \textnormal{(i)}
    The system is absolutely stable if and only if $p_0 \in S_0$, where the parameter set $S_0$ is defined as 
        \begin{equation}
        \begin{aligned}
        \label{eq:S0}
            S_0:= 
                & \{p_0:\ k_0 > |k^\tau_0| \quad \text{and} \quad  h_0 > |h^{-}_0|\}, 
            \\
                & \text{where} \quad ({h^{-}_0})^2 = 2k_0 + (h^\tau_0)^2 - 2\sqrt{(k_0)^2 - (k^\tau_0)^2}.
        \end{aligned}  
        \end{equation}
  %\item[{\textnormal{(ii)}}] 

  \textnormal{(ii)}
    The system belongs to universality class I if and only if $p_0 \in S_I$ with 
\begin{equation}
\label{eq:SI}
        S_I:= \left\{ p_0:\ \left|k_0\right| < |k^\tau_0|\right\}.
\end{equation}

\textnormal{(iii)}
  %\item[{\textnormal{(iii)}}]  
    The system belongs to universality class II if and only if $p_0\in S_{II}$ with
\begin{equation}
\label{eq:SII}
\begin{aligned}
        S_{II}:=  \left\{ p_0:\ \left|h_0\right| < |h^{-}_0| \right\} \cap S_{II,\mathrm{part}},
\end{aligned}
\end{equation}
where $S_{II,\mathrm{part}}$ is defined as the set of all $p_0$ for which at least one of the following conditions holds

    {\quad  a) $k_0 > |k^\tau_0|$,} 

    {\quad b)  $-\left[\left(\frac{h^\tau_0}{2}\right)^2+\left(\frac{k^\tau_0}{h^\tau_0}\right)^2\right]\leq k_0 <-|k^\tau_0|$.}

  \textnormal{(iv)}
    The system belongs to universality class $U$ if and only if $p_0\in S_{U}$, where $S_U$ is defined as a the set of all $p_0$ for which at least one of the following conditions holds

    {\quad  a) $k_0<-\left[\left(\frac{h^\tau_0}{2}\right)^2+\left(\frac{k^\tau_0}{h^\tau_0}\right)^2\right]$, }
 
    {\quad b)   $-\left[\left(\frac{h^\tau_0}{2}\right)^2+\left(\frac{k^\tau_0}{h^\tau_0}\right)^2\right]\leq k_0 <-|k^\tau_0|$, and $ \left|h_0\right| > |h^{-}_0|$,}
  
    {\quad c) $k_0 >|k^\tau_0|$ and $h_0<- |h^{-}_0|$.}
%\end{enumerate}
\end{theorem}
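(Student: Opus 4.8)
The plan is to reduce everything to the two delay‑independent criteria recalled in Section~\ref{sec:ACS}: system \eqref{Eq:error-uncoupled} is absolutely stable iff its ACS is of class $0$ and $M+M^\tau$ is (Hurwitz) stable; it is of universality class I iff its ACS is of class I; of class II iff its ACS is of class II; and of class $U$ iff its ACS is of class $0$ and $M$ is unstable. So the proof splits into (a) determining stability of the two $2\times2$ instantaneous matrices by Routh--Hurwitz, and (b) a root count for the single ACS curve \eqref{eq:ACS_1}, followed by bookkeeping that matches the outcomes to the sets $S_0,S_I,S_{II},S_U$.

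For (a): the characteristic polynomials of $M$ and $M+M^\tau$ are $\mu^2+h_0\mu+k_0$ and $\mu^2+(h_0+h^\tau_0)\mu+(k_0+k^\tau_0)$, so $M$ is stable iff $k_0>0$ and $h_0>0$, it has an eigenvalue with positive real part iff $k_0<0$ or $h_0<0$, and $M+M^\tau$ is stable iff $k_0+k^\tau_0>0$ and $h_0+h^\tau_0>0$. For (b): setting $\gamma(\omega)=0$ in \eqref{eq:ACS_1} and clearing the logarithm gives the biquadratic $\omega^4+b\omega^2+c=0$ with $b=h_0^2-2k_0-(h^\tau_0)^2$ and $c=k_0^2-(k^\tau_0)^2$; with $u=\omega^2\ge 0$ one counts nonnegative roots of $Q(u)=u^2+bu+c$. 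Since $\gamma(\omega)\to-\infty$ as $|\omega|\to\infty$ and the sign of $\gamma(0)$ is opposite to that of $c$, one reads off the ACS type: $c<0$ yields exactly one positive root and a single interval of positive $\gamma$ around the origin — two crossings, class I; $c>0$ with $b\le-2\sqrt c$ yields two positive roots — four crossings, class II; $c>0$ with $b>-2\sqrt c$ (in particular whenever $c>0$ and $2k_0+(h^\tau_0)^2<2\sqrt c$) yields no positive root and $\gamma<0$ throughout — no crossing, class $0$. The bridge to the theorem is the identity $(h^-_0)^2=h_0^2-b-2\sqrt c$ (using $2k_0+(h^\tau_0)^2=h_0^2-b$ and $\sqrt c=\sqrt{k_0^2-(k^\tau_0)^2}$), which turns $b\le-2\sqrt c$ into $h_0^2\le(h^-_0)^2$; the sign condition $(h^-_0)^2\ge0$ becomes $k_0\ge-[(h^\tau_0/2)^2+(k^\tau_0/h^\tau_0)^2]$, the AM--GM bound $(h^\tau_0/2)^2+(k^\tau_0/h^\tau_0)^2\ge|k^\tau_0|$ places that threshold below $-|k^\tau_0|$, and when $k_0>|k^\tau_0|$ one has $(h^-_0)^2-(h^\tau_0)^2=2\big(k_0-\sqrt{k_0^2-(k^\tau_0)^2}\big)>0$, i.e. $|h^-_0|>|h^\tau_0|$.

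Assembling the cases: (ii) is immediate, ACS class I $\Leftrightarrow c<0\Leftrightarrow|k_0|<|k^\tau_0|\Leftrightarrow p_0\in S_I$. For (iii), ACS class II $\Leftrightarrow c>0$ and $h_0^2\le(h^-_0)^2$; the latter forces $(h^-_0)^2>0$, and $c>0$ splits as $k_0>|k^\tau_0|$ (where $(h^-_0)^2>0$ automatically) or $k_0<-|k^\tau_0|$ (where $(h^-_0)^2>0\Leftrightarrow k_0\ge-[(h^\tau_0/2)^2+(k^\tau_0/h^\tau_0)^2]$), which is exactly $S_{II}$. For (i), absolute stability means ACS class $0$ and $M+M^\tau$ stable; the latter together with $c>0$ excludes $k_0<-|k^\tau_0|$ (else $k_0+k^\tau_0<0$), so $k_0>|k^\tau_0|$, whence ACS class $0$ reads $h_0^2>(h^-_0)^2$ and, using $|h^-_0|>|h^\tau_0|$, the condition $h_0+h^\tau_0>0$ upgrades $|h_0|>|h^-_0|$ to $h_0>|h^-_0|$; running the chain backwards gives the converse, so this is $S_0$. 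For (iv), class $U$ means ACS class $0$ and $M$ unstable; with $c>0$, either $k_0<-|k^\tau_0|$ (then $M$ is automatically unstable, and ACS class $0$ holds iff $(h^-_0)^2<0$, i.e. $k_0<-[(h^\tau_0/2)^2+(k^\tau_0/h^\tau_0)^2]$ — case~a — or $(h^-_0)^2\ge0$ with $|h_0|>|h^-_0|$ — case~b), or $k_0>|k^\tau_0|$ (then $M$ unstable forces $h_0<0$, and ACS class $0$ gives $h_0^2>(h^-_0)^2$, hence $h_0<-|h^-_0|$ — case~c); this is $S_U$.

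The main obstacle will be the sign bookkeeping around $(h^-_0)^2$, in particular the regime where $(h^-_0)^2<0$ so that the symbol $|h^-_0|$ is only a formal stand‑in for the real number $h_0^2-b-2\sqrt c<0$ (there the inequalities ``$|h_0|>|h^-_0|$'' hold trivially and ``$|h_0|<|h^-_0|$'' are empty), together with checking that $S_0,S_I,S_{II},S_U$ exhaust the parameter space up to the measure‑zero boundary set $\{|k_0|=|k^\tau_0|\}\cup\{|h_0|=|h^-_0|\}\cup\{k_0=-[(h^\tau_0/2)^2+(k^\tau_0/h^\tau_0)^2]\}$, on which the ACS has a degenerate (origin or tangential) crossing and no universality class is claimed. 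The rest is routine algebra on the quadratic $Q$ and the two $2\times2$ Hurwitz tests.
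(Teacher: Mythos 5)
Your proposal follows essentially the same route as the paper: classify the ACS by counting nonnegative roots of the biquadratic $\omega^4+b\,\omega^2+c=0$ (with $b=h_0^2-2k_0-(h_0^\tau)^2$, $c=k_0^2-(k_0^\tau)^2$) obtained from $\gamma(\omega)=0$, combine this with Routh--Hurwitz tests on the $2\times 2$ instantaneous matrices, and translate the outcomes into the sets $S_0$, $S_I$, $S_{II}$, $S_U$, treating the degenerate boundaries and the regime $({h_0^-})^2<0$ exactly as the paper does. The only immaterial difference is in part (i), where you pair ACS class $0$ with stability of $M+M^\tau$ while the paper's proof pairs it with absence of the strongly unstable spectrum (i.e.\ $k_0\ge 0$, $h_0\ge 0$); in this $2\times 2$ setting both combinations reduce to the same set $S_0$.
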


\begin{proof}
Firstly, we note that the strongly unstable spectrum is not empty (the characteristic equation \eqref{eq:F_mu} for the instantaneous part of the system has at least one root with positive real part) if and only if $k_0<0$ or $h_0<0$. 

The absolute stability criterion is that the strongly unstable spectrum is absent (i.e., $k_0\ge0$ and $h_0\ge0$),
and the ACS belongs to class 0, i.e, it does not cross the imaginary axis~\cite{Yanchuk2022b}. The following equation determines the points, at which the curve of the ACS \eqref{eq:ACS_1} crosses the imaginary axis and $\gamma(\omega)=0$:
\begin{align}
        \omega^4+\left[ \left(h_0\right)^2-2k_0-\left(h^\tau_0\right)^2\right]\omega^2+ \left(k_0\right)^2-\left(k^\tau_0\right)^2=0.
\label{eq:criticalCond}
\end{align}
With $\nu=\omega^2$, 
%AA Let -> With
equation \eqref{eq:criticalCond} becomes  quadratic in $\nu$, and 
\begin{align}
   \nu_{\pm}=-\frac{\left(h_0\right)^2-2k_0-\left(h^\tau_0\right)^2}{2}
   \pm\sqrt{\left[ \frac{\left(h_0\right)^2-2k_0-\left(h^\tau_0\right)^2}{2}\right]^2-\left[ \left(k_0\right)^2-\left(k^\tau_0\right)^2\right]}.
   \label{eq:nu}
\end{align}
The ACS is of class 0 if this quadratic equation has no real non-negative roots, which corresponds to the condition that the spectrum does not intersect the imaginary axis. This can occur in two cases:
\begin{subequations} \label{eq:UC0_conditions}
\begin{align}
\label{eq:UC0_condition1}
\nu_{\pm}~\text{are not real:}
    \left\{
    \begin{array}{cc}
    \left(k_0\right)^2-\left(k^\tau_0\right)^2 >0,\\
    \Delta=\left[ \left(h_0\right)^2-2k_0-\left(h^\tau_0\right)^2\right]^2-4\left[ \left(k_0\right)^2-\left(k^\tau_0\right)^2\right]<0
    \end{array}
    \right.
\end{align}
or
\begin{align}
\label{eq:UC0_condition2}
\nu_{\pm}~\text{are negative real solutions:}
    \left\{
    \begin{array}{ccc}
    \left(k_0\right)^2-\left(k^\tau_0\right)^2 >0,\\
    \Delta=\left[ \left(h_0\right)^2-2k_0-\left(h^\tau_0\right)^2\right]^2-4\left[ \left(k_0\right)^2-\left(k^\tau_0\right)^2\right]\geq0,\\
    \left(h_0\right)^2-2k_0-\left(h^\tau_0\right)^2>0.
    \end{array}
    \right.
\end{align}
\end{subequations}
By combining the conditions in \eqref{eq:UC0_condition1} and \eqref{eq:UC0_condition2}, the ACS is of class 0 if and only if
\begin{align*}
        \left(k_0\right)^2-\left(k^\tau_0\right)^2 >0 
\quad\text{and}\quad 
\left(h_0\right)^2>2k_0+\left(h^\tau_0\right)^2 - 2 \sqrt{\left(k_0\right)^2-\left(k^\tau_0\right)^2},    
\end{align*}
which can be simplified as
\begin{align*}
        k_0> \left|k^\tau_0\right| 
\quad\text{and}\quad 
        h_0 > |h_0^-|,  
\end{align*}
where 
\begin{align*}
(h_0^-)^2 = 
        2k_0+\left(h^\tau_0\right)^2 - 2 \sqrt{\left(k_0\right)^2-\left(k^\tau_0\right)^2}. 
\end{align*}
This proves \eqref{eq:S0} and statement (i) of the theorem. 

Next, the condition for the system \eqref{eq:DDE_e0}--\eqref{eq:DDE_xi0}  to be of universality class I is that the polynomial \eqref{eq:criticalCond} has exactly two real roots $\pm\omega_H$ and $\gamma(0)>0$. Then, from equation \eqref{eq:nu}, it follows that the criterion for satisfying the inequality $0 < \nu_{+} = \omega_H^2$ is 
\begin{align*}
        \left(k_0\right)^2-\left(k^\tau_0\right)^2 <0,
\end{align*}
or, equivalently, $\left|k_0\right|<\left|k^\tau_0\right|$.
This proves statement (ii) of the theorem. 

The system belongs to the universality class II if the polynomial \eqref{eq:criticalCond} has two pairs of real roots  $\pm \omega_{H_1}$ and $\pm \omega_{H_2}$. These exist if the discriminant is positive and if both roots $\nu_\pm=\omega_{H_{1,2}}^2$, given by equation \eqref{eq:nu}, are positive. This holds under the following conditions:
\begin{align*}
  \left\{  \begin{array}{ccc}
        |k_0|>|k^\tau_0|,\\
         \left(h_0\right)^2-2k_0-\left(h^\tau_0\right)^2<0,\\
         \Delta=\left[ \left(h_0\right)^2-2k_0-\left(h^\tau_0\right)^2\right]^2-4\left[ \left(k_0\right)^2-\left(k^\tau_0\right)^2\right]>0.
    \end{array}\right.
\end{align*}
Straightforward calculations lead to one of the following conditions holding:
\begin{subequations}
    \label{eq:UCII_conditions}
    \begin{align}
        \label{eq:UCII_condition1}
        \left\{  \begin{array}{ccc}
        k_0>|k^\tau_0|,\\
          \left(h_0\right)^2<2k_0+\left(h^\tau_0\right)^2 - 2 \sqrt{\left(k_0\right)^2-\left(k^\tau_0\right)^2}.
    \end{array}\right.
    \end{align}
    \begin{align}
        \label{eq:UCII_condition2}
        \left\{  \begin{array}{ccc}
        k_0<-|k^\tau_0|,\\
          \left(h_0\right)^2<2k_0+\left(h^\tau_0\right)^2 - 2 \sqrt{\left(k_0\right)^2-\left(k^\tau_0\right)^2},\\
          0\leq 2k_0+\left(h^\tau_0\right)^2 - 2 \sqrt{\left(k_0\right)^2-\left(k^\tau_0\right)^2}.
    \end{array}\right.
    \end{align}
\end{subequations}
Combining  the conditions in \eqref{eq:UCII_condition1} and \eqref{eq:UCII_condition2} yields
\begin{align*}
        \left|h_0 \right|< \left|h_0^-\right|,
\end{align*}
and either of the following conditions is satisfied:
\begin{align*}
        k_0>\left|k^{\tau}_0\right|,~
\text{or},~ 
        -\left[\left(\frac{h^\tau_0}{2}\right)^2+\left(\frac{k^\tau_0}{h^\tau_0}\right)^2\right]< k_0 <-|k^\tau_0|.
\end{align*}
Note that in the case $h_0^\tau =0$, the left-hand side can be considered to be $-\infty$ and $k_0$ is not bounded from below.
%AA inserted "from below"
This proves statement (iii) of the theorem. 

Finally, for the system to be of universality class U, we require that the strongly unstable spectrum is not empty (unstable), i.e.,
$k_0<0$ or $h_0<0$, and the ACS does not cross the imaginary axis (hyperbolic).
From the no-crossing conditions in Eqs.~\eqref{eq:UC0_condition1} and \eqref{eq:UC0_condition2}, we first consider
\begin{align*}
 2k_0+\left(h^\tau_0\right)^2 - 2 \sqrt{\left(k_0\right)^2-\left(k^\tau_0\right)^2}<0,  
\end{align*}
which leads to 
 \begin{align*}
      k_0<-\left[\left(\frac{h^\tau_0}{2}\right)^2+\left(\frac{k^\tau_0}{h^\tau_0}\right)^2\right].
 \end{align*}
Further,  if
\begin{align*}
    (h_0^-)^2 = 2k_0+\left(h^\tau_0\right)^2 - 2 \sqrt{\left(k_0\right)^2-\left(k^\tau_0\right)^2}\geq0,
\end{align*}
we get
\begin{align*}
        -\left[\left(\frac{h^\tau_0}{2}\right)^2+\left(\frac{k^\tau_0}{h^\tau_0}\right)^2\right]\leq k_0 <-|k^\tau_0|
    \quad\text{and}\quad 
        |h_0| > |h^{-}_0|,    
\end{align*}
or
\begin{align*}
        k_0 >|k^\tau_0|
\quad\text{and}\quad
        h_0<- |h^{-}_0|.    
\end{align*}
This confirms statement (iv) of the theorem. 
%AA deleted "The proof is complete." as redundant.
\end{proof}

Figure~\ref{Fig:Abs_stable_Region} illustrates the classification provided by Theorem~\ref{The_4-1} in the $(k_0,h_0)$-plane for fixed parameter values $k^{\tau}_0=1.5$ and $h^{\tau}_0=-3$. 
\begin{figure}
\centering	      
\includegraphics[width=9cm]{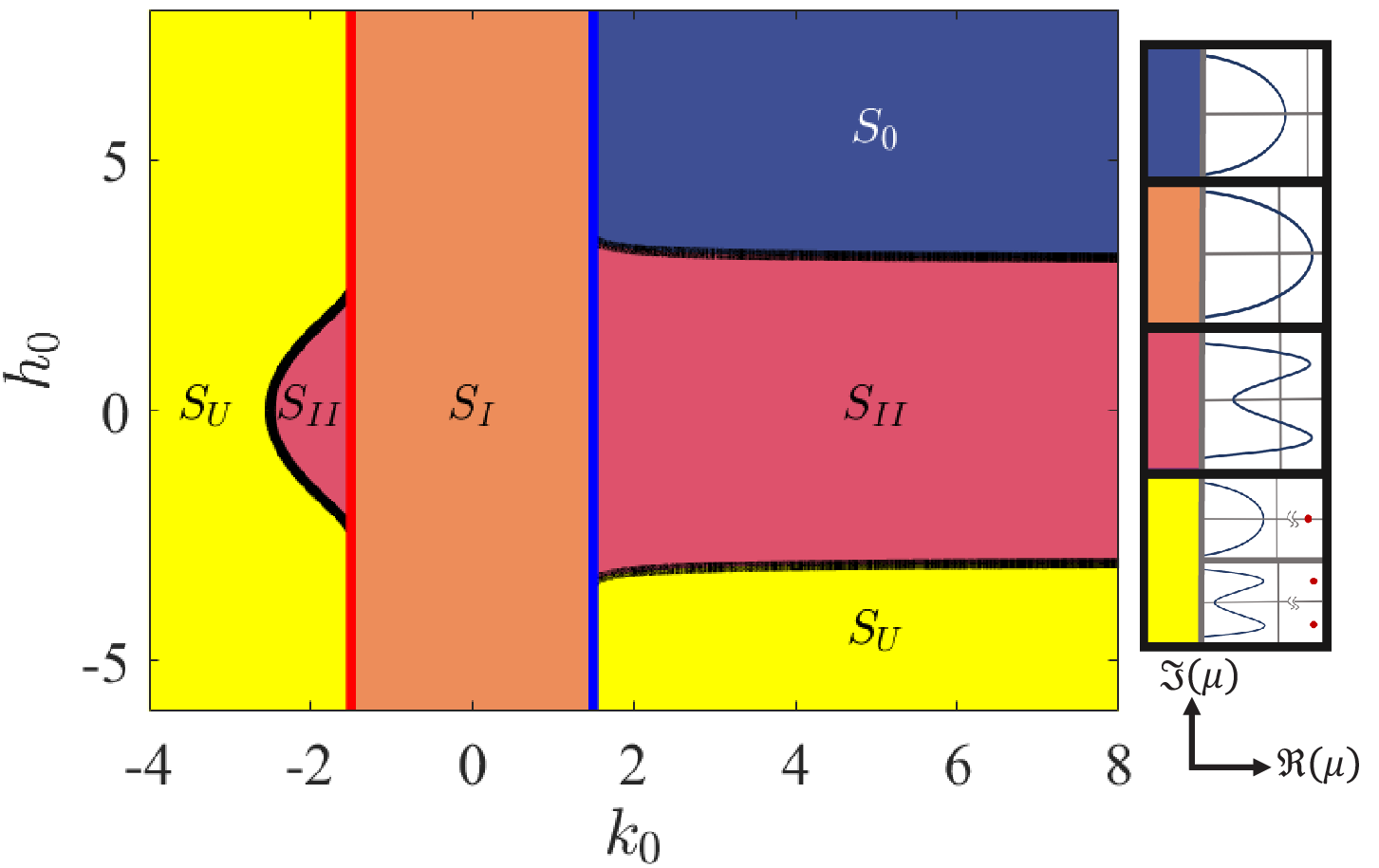}
      \caption{Classification of error system~\eqref{eq:error_example1} in the $(k_0,h_0)$-plane for fixed $k^\tau_0=1.5, h^\tau_0=-3$, according to Theorem~\ref{The_4-1}. 
      The colors show class 0 (blue),
% \textcolor{red}{(AA: the black text in the blue region is difficult to read when printed on paper.)}      
      class I (orange), class II (red), and class U (yellow), respectively. 
      The boundaries are given by $(h_0^-)^2 = 2k_0+\left(h^\tau_0\right)^2 - 2 \sqrt{\left(k_0\right)^2-\left(k^\tau_0\right)^2}$ (black), $k_0=k_0^\tau$ (red), and $k_0=0$ or $k_0=-k_0^\tau$ (blue) lines.
      The right panel sketches the asymptotic continuous spectrum (blue curves) and strongly unstable spectrum (red symbols) for the different classes.
\label{Fig:Abs_stable_Region}}
\end{figure}
The right panel of Fig.~\ref{Fig:Abs_stable_Region} also sketches the asymptotic continuous spectrum (blue curves) and the strongly unstable spectrum (red symbols) for the different classes.
This classification is independent of delay, with each region corresponding to either a stable state or a specific bifurcation scenario as the time delay varies \cite{wang2024universal}, as we will illustrate below. 

Figure~\ref{Fig:ACS_Curves} illustrates the spectrum of Eq.~\eqref{eq:error_example1} for fixed time delays $\tau=20$ and different parameter regions corresponding to the classification from Fig.~\ref{Fig:Abs_stable_Region}. 
\begin{figure}
\centering	
\includegraphics[width=11cm]{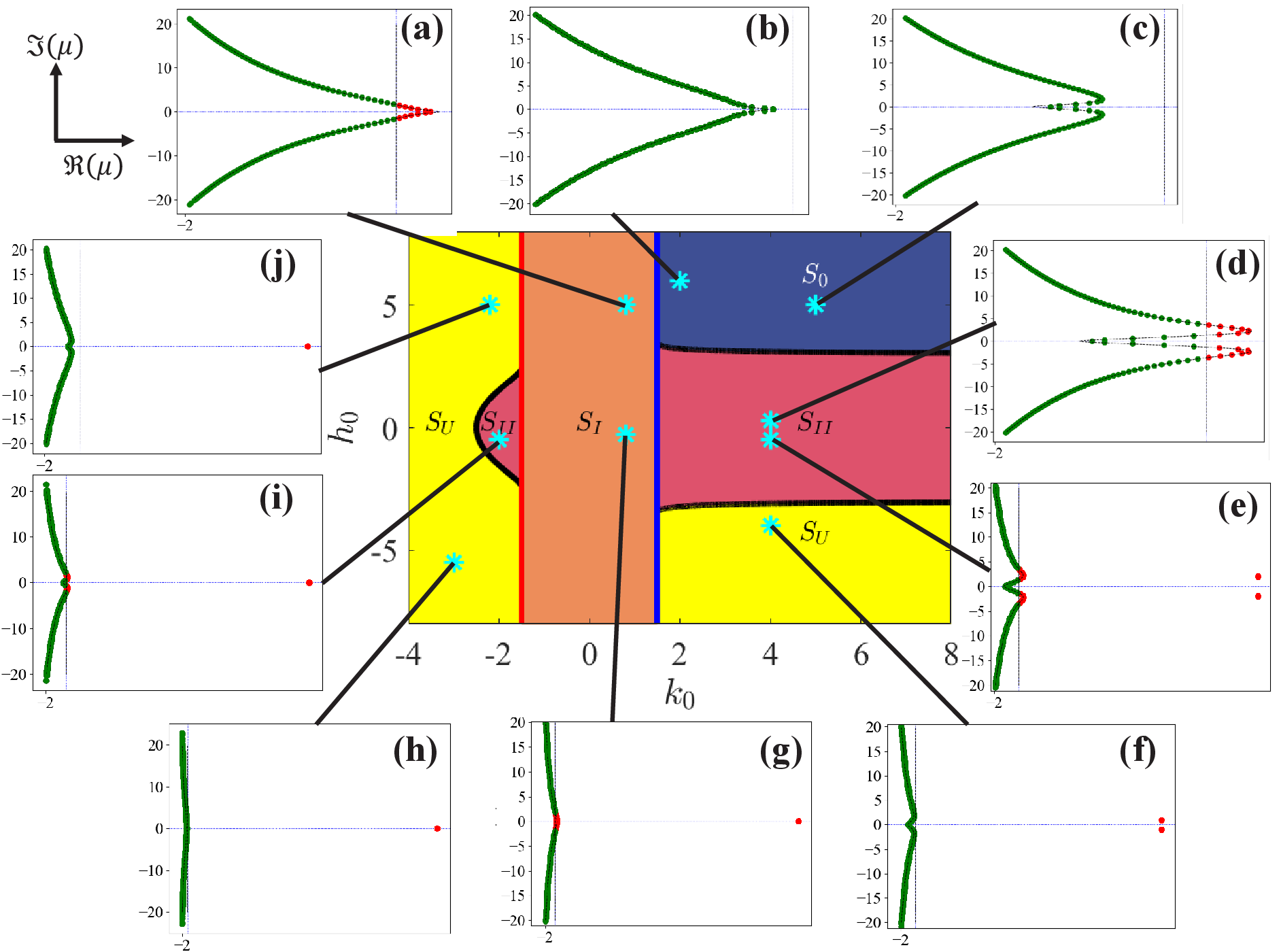}
    \caption{The spectrum of the  DDE \eqref{eq:error_example1} corresponding to the different regions in Fig.~\ref{Fig:Abs_stable_Region}, where $k_0^\tau=1.5$, $h_0^\tau=-3$. 
    Panels (a) to (j) show the spectrum  for fixed time-delay $\tau=20$ and representative parameters from different regions in the $(k_0,h_0)$-plane. 
    The green and red dots show the stable and unstable points, respectively, from the spectrum, and the solid lines denote the curves of ACS.
\label{Fig:ACS_Curves}}
\end{figure}
So one can observe the stable spectrum in the region $S_0$, the unstable and hyperbolic spectrum in region $S_U$, and the multiple unstable characteristic roots in the regions $S_I$ and $S_{II}$ with relatively small real parts. 

The delay-independent stability region is given by $S_0$. 
However, the system may also be stable in the other regions (except $S_U$) for small or intermediate delays. 
The stabilization mechanisms in the other regions are determined by specific sequences of bifurcations as described in \cite{wang2024universal}. 
To find the exact values of these bifurcations, we substitute $\mu=i\omega$ into the characteristic equation \eqref{eq:charac_1},
\begin{align}
        -\omega^2+ih_0\omega
        +ih^\tau_0\omega\left[\cos(\omega\tau)-i\sin(\omega\tau)\right]
        +k_0+k^\tau_0\left[\cos(\omega\tau)-i\sin(\omega\tau)\right]=0.
\label{eq:Pur_Chara_1}
\end{align}
By separating the real and imaginary parts of \eqref{eq:Pur_Chara_1}, we have
\begin{align*}
        -\omega^2+h^\tau_0\omega\sin(\omega\tau)+k_0+k^\tau_0\cos(\omega\tau)=0, 
   \\
        h_0\omega+h^\tau_0\omega\cos(\omega\tau)-k^\tau_0\sin(\omega\tau)=0,
\end{align*}
which can be solved for $k_0(\omega)$ and $h_0(\omega)$ to obtain the stability boundary parametrically in the $(k_0,h_0)$ parameter plane:
\begin{align}
    k_0(\omega)&=
            \omega^2-h^\tau_0\omega\sin(\omega\tau)-k^\tau_0\cos(\omega\tau),
    \label{eq:k0_h0-1} \\
    h_0(\omega)& =
            \frac{1}{\omega}k^\tau_0\sin(\omega\tau)-h^\tau_0\cos(\omega\tau). 
    \label{eq:k0_h0-2}
\end{align}
The stability boundaries for fixed time delays $\tau=2$ and $\tau=20$ are shown in Fig.~\ref{Fig:Stable_Region}.
\begin{figure}
\centering	        
\includegraphics[width=14.5cm]{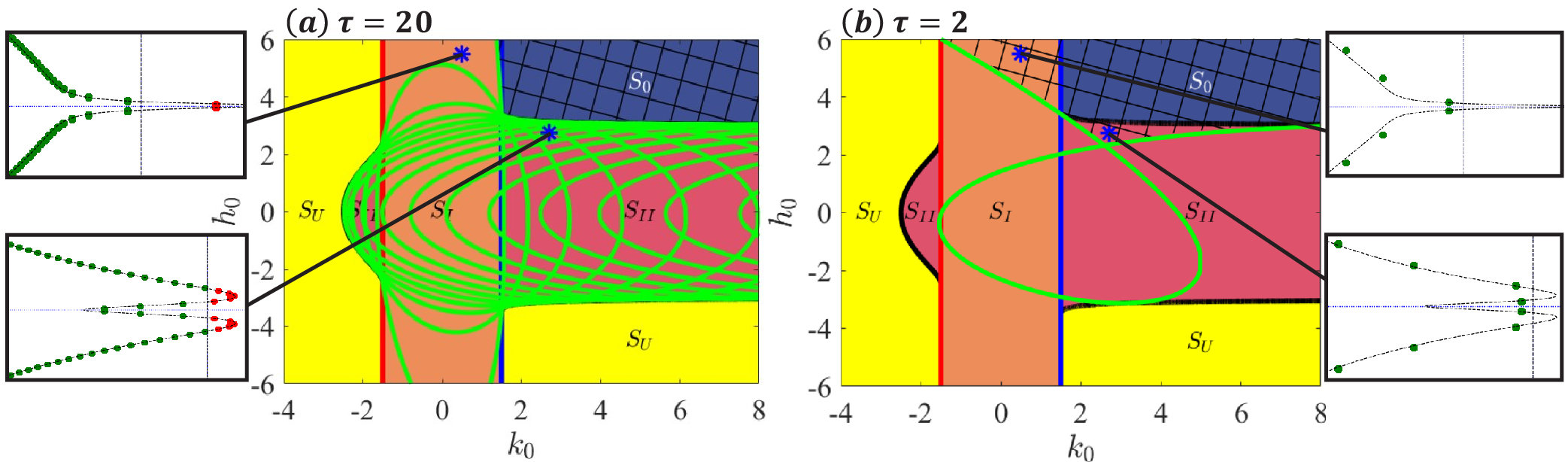}
    \caption{Bifurcation diagrams in the parameter space $(k_0,h_0)$ for time delays $ (a)~\tau=20$ and $(b)~\tau=2$  ($k^\tau_0=1.5$ and $h^\tau_0=-3$ fixed for the both diagrams). 
    The colors have the same meaning as in Fig.~\ref{Fig:ACS_Curves}. 
    The green lines are bifurcation lines. 
    The stability regions are marked by the checkered area with black skewed grid lines. 
    Sub-figures illustrate the spectra and stabilization mechanisms for the different points as the delay $\tau$ decreases.
\label{Fig:Stable_Region}}
\end{figure}
The stability regions are indicated by the cross-hatched pattern.
We observe that the stability region includes the whole region $S_0$ and also some parts of the other regions, depending on the time delay. 
For larger delays, the stability region shrinks to $S_0$, while for smaller delays it becomes larger. 
We also see how the bifurcation lines intersect in the region $S_{II}$, indicating the presence of two pairs of purely imaginary characteristic roots \cite{wang2024universal}. 
Note that degenerate points can only appear in the parameter region $S_{II}$.

\subsection{Examples with vanishing coupling coefficients \label{sec:examples}}
Here we analyze four specific scenarios that illustrate the impact of incomplete communication on the dynamics of active agents.
Each example highlights some communication constraints and their influence on the stability of motion and formation of agents.

\textit{Example 1}: 
    The velocity information is not transmitted from the virtual leader to the active agents, i.e.,  $h_0= h^\tau_0= 0$.
    The other parameters are fixed as $k_0=6$, $k^\tau_0=0.3$. 
    According to Theorem~\ref{The_4-1}, under these conditions, the system~\eqref{eq:error_example1} falls into universality class II.
    Figure~\ref{Fig:CharacCurve}(a) illustrates the stability of this system, represented by the maximal real parts of the eigenvalues, as the delay $\tau$ increases. 
    We observe the intervals in $\tau$, where the system gains stability, while it is unstable for all $\tau \gtrsim 25$.
% \textcolor{red}{(AA: not clear from Fig. 4.5a. SY: correct)}  
    This is an example of stability islands appearing in the parameter space, similar to those in \cite{DHuys2008,Huddy2020,Daniel2022a}.
\begin{figure}
\centering		
    \subfigure{\includegraphics[width=5.5cm]{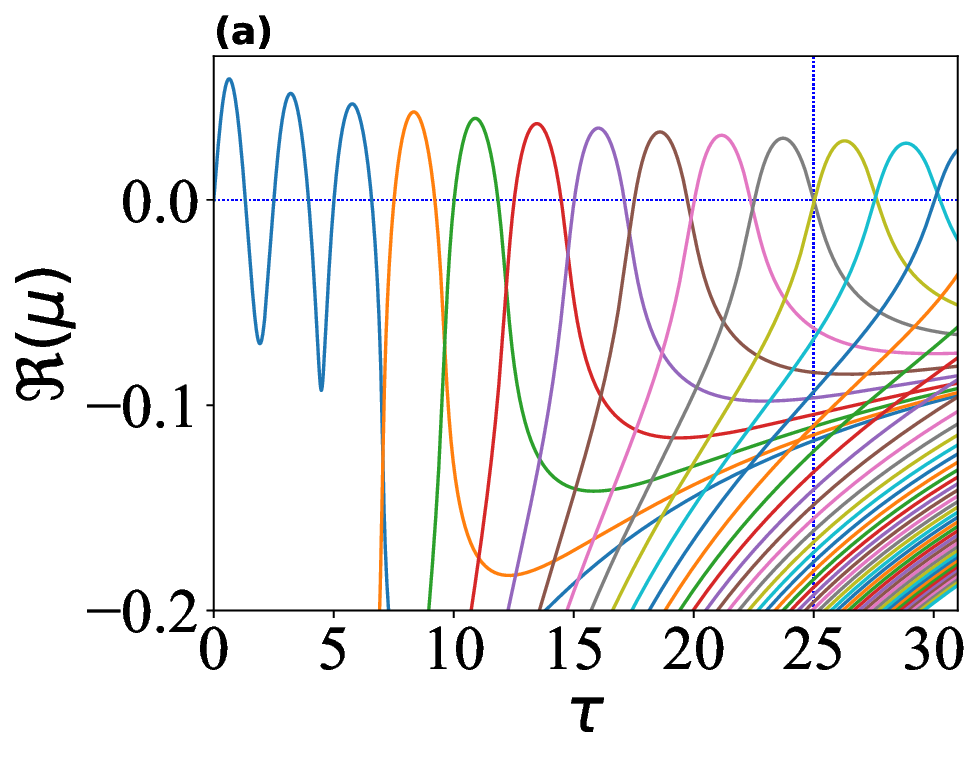}}
    \subfigure{\includegraphics[width=5.6cm]{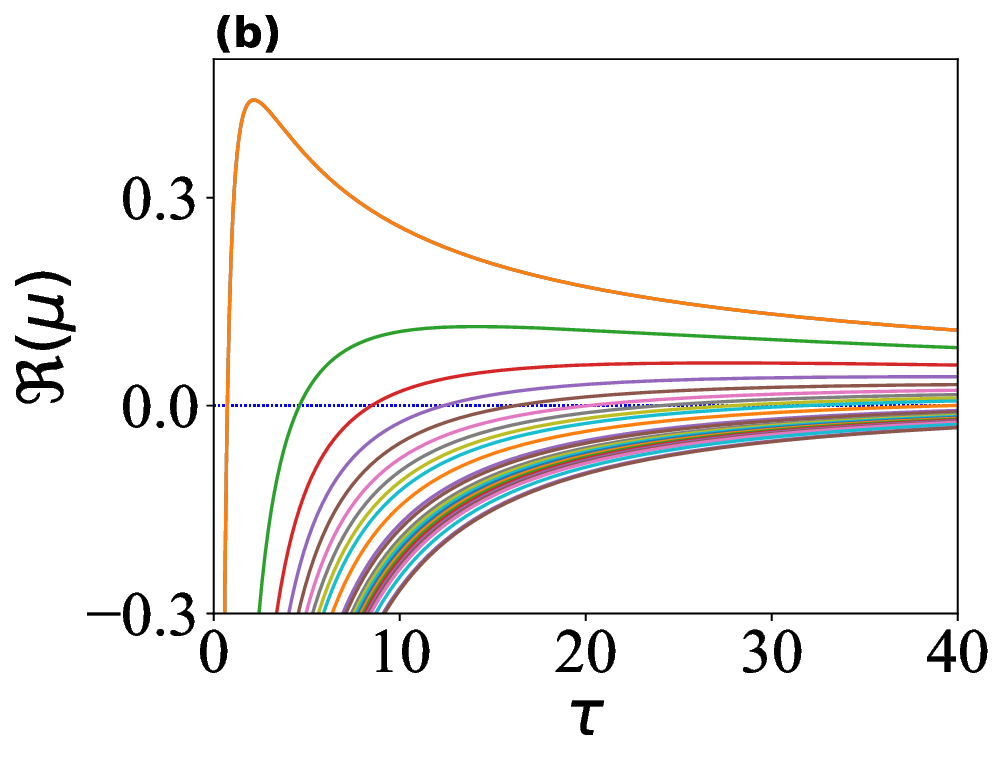}}
    \caption{(a) Maximal real parts of the eigenvalues $\Re(\mu)$ as a function of delay $\tau$ for parameter values  $h_0=h^\tau_0=0$, $k_0=6$, $k^\tau_0=0.3$ as in Example 1.
% \textcolor{red}{(AA: extend x-axis to at least 25?)}
% \textcolor{blue}{(YW: I extended  x-axis to 30, hopefully this will become clearer.)}
    % (b) Two largest maximal real parts of eigenvalues for five different parameter values $h^\tau_0=-0.8,0,0.5,1,2$ and fixed $k_0=k^\tau_0=0$, $h_0=1$, as in Example 2.  
    (b) Maximal real parts of the eigenvalues $\Re(\mu)$ as a function of delay $\tau$ for  parameter values $k_0=h_0=0$, $k^\tau_0=1$, $h^\tau_0=1.5$, as in Example 2.
\label{Fig:CharacCurve}}
\end{figure}

% \textit{Example 2:} 
%     Consider the case when active agents have no access to the position information of the virtual leader, i.e., $k_0=k^\tau_0=0$. 
%     Accordingly to Theorem~\ref{The_4-1}, this case is degenerate since the system~\eqref{eq:error_example1} parameters do not lie within some open region defined in the theorem, but rather on the boundary between $S_U$ and $S_{II}$. 
%     Figure~\ref{Fig:CharacCurve}(b) illustrates that stabilization for finite delay is still possible, while the system losses its stability with the increasing of the delay $\tau$.
% \textcolor{red}{(AA: this is not clear to me. If we do not have access to the position, but only to the velocity, one eigenvalue should always be zero?
%  I am also unsure, where the $\tau$ dependence can come from if $h_0^\tau = 0$ as in the blue curve of Fig 4.5(b).  In this case, example 3 should apply.)}

% \textit{Example 3}: 
%     Consider the case when communication between active agents occurs without time delays, i.e.,  $k^\tau_0=h^\tau_0=0$. 
%     In this case, the error system~\eqref{eq:error_example1} is governed by the ordinary differential equations and the characteristic equation \eqref{eq:F_mu}. 
%     Hence, the system is stable for $k_0 > 0$ and $h_0 > 0$ and unstable otherwise.

\textit{Example 2}: 
    Consider the case $k_0=h_0=0$, i.e., the active agents communicate with time delays only. 
    Theorem~\ref{The_4-1} implies that then the system~\eqref{eq:error_example1} is classified as belonging to class I.
    Figure~\ref{Fig:CharacCurve}(b) shows a stability region for small delays and
%AA replaced "a destabilization with the delay $\tau$ increases." with
    an increasing number of unstable eigenvalues with increasing $\tau$.

\subsection{Numerical simulation for an example with three agents}
We now present numerical simulations of a system with three active agents, which communicate only with the virtual leader, see Fig.~\ref{Fig:Motion}.
\begin{figure}
\centering
\includegraphics[width=14cm]{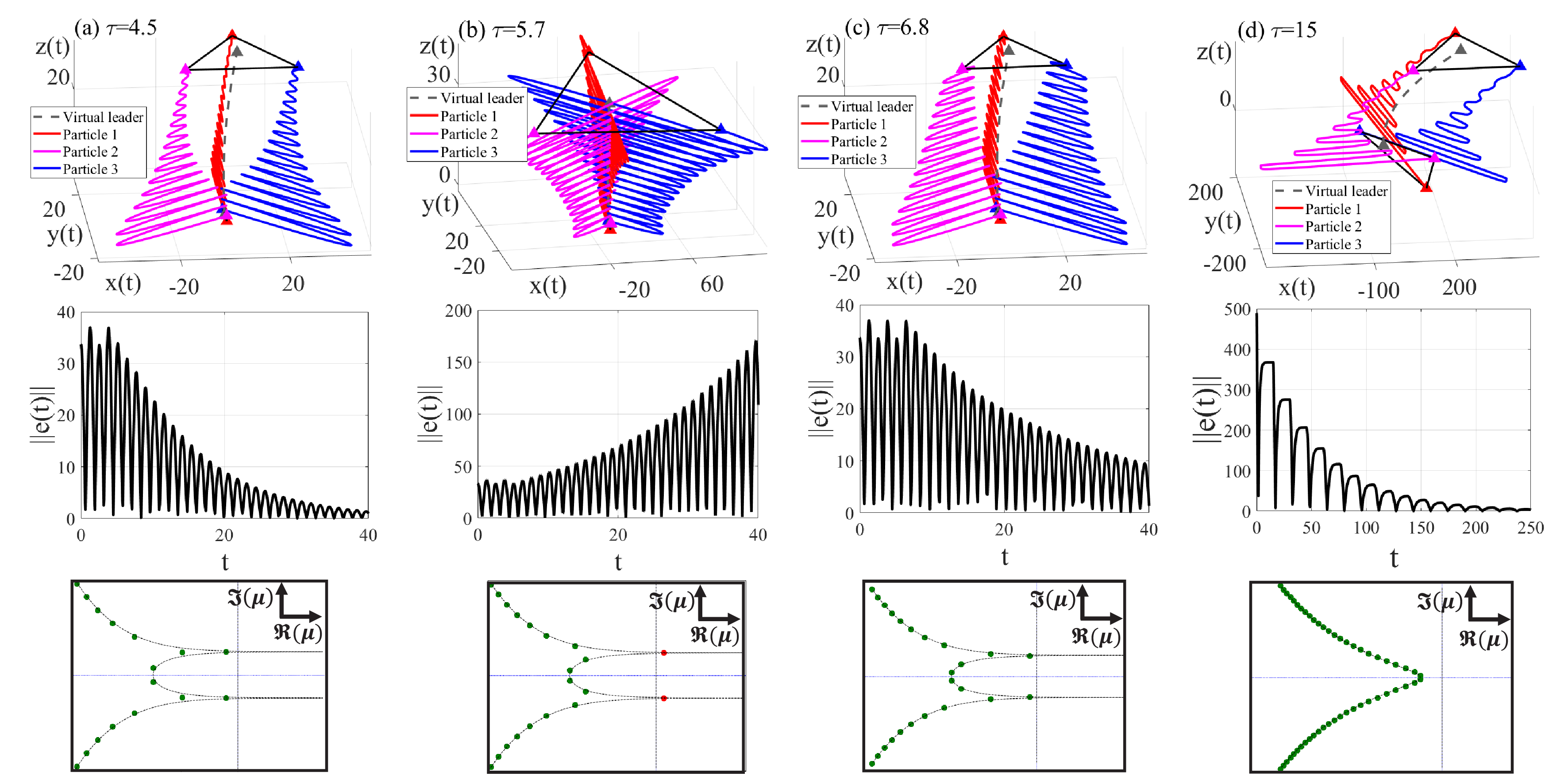}
    \caption{Illustration of stable and unstable formations of active agents when there is no coupling between them.  
    Including: the formation motion of active agents, as given by Eqs.~\eqref{eq:Motion_AP_a}--\eqref{eq:Motion_AP_b}; the time dependence of the error, as given by Eqs.~\eqref{eq:DDE_e0}--\eqref{eq:DDE_xi0}; the asymptotic continuous spectrum of the error system, as given by Eq.~\eqref{eq:ACS-complex}.
    (a)--(c) parameters are as in example 1 from Fig.~\ref{Fig:example1} ( $k_0=6$, $k^\tau_0=0.3$, $h_0=h^\tau_0=0$, $c=1$).
    (a) All active agents stabilize to the desired pattern formation at small values of $\tau=4.5$; 
    (b) all agents undergo repelling oscillatory motion as delay increases to $\tau=5.7$; 
    (c) all agents stabilize to the desired formation configuration as further increases to  $\tau=6.8$.
    (d) Parameters correspond to the region $S_0$ with stable spectrum: ($k_0 = 2$, $k_0^{\tau} = 1.5$, $h_0 = 3$, $h_0^{\tau} = 1.2$, $c=10$), i.e., the agents achieve the desired formation configuration for all values of $\tau$. 
\label{Fig:Motion}}
\end{figure}
We consider four cases. 
For cases 1, 2, and 3 we choose the parameters as in example 1 from Fig.~\ref{Fig:CharacCurve}(a), and for case 4 we choose $k_0 = 2$, $h_0 = 3$,  $ k_0^{\tau} = 1.5 $, and $h_0^{\tau} = 1.2$. 
For all cases, the target trajectory can be set as follows:
\begin{align}\nonumber
\mathbf{R_T}=&
        1_3\otimes R_0(t)+\mathbf{s}
    \\ \nonumber=&
        1_3\otimes\left[0.005(t^2+1),\ 0.5t,\ 0.8t\right]\\&+c\left([0,-10, 0], \ [20, 10, 0],\ [-20, 10,0]\right),~c\in\mathbb{Z^{+}}
\label{Eq:target_trajectory}
\end{align} 
which generates a parabola trajectory for all agents, while a desired isosceles triangular formation configuration. 
To clarify the motion trajectory, we consider that cases 1, 2, and 3 have $c=1$, and case 4 has $c=10$.

Figure~\ref{Fig:Motion} presents the simulation results, which illustrate the formation dynamics of the active agents governed by Eqs.~\eqref{eq:Motion_AP_a}--\eqref{eq:Motion_AP_b}. 
The control input applied to each agent is defined as
\begin{align*}
U_{i} =&
        -k_0\left[R_i(t)-\left(R_0(t)+s_i\right)\right]-k^\tau_0\left[R_i(t-\tau)-\left(R_0(t-\tau)+s_i\right)\right]
    \\&
        -h_0\left[V_i(t)-V_0(t)\right]-h^\tau_0\left[V_i(t-\tau)-V_0(t-\tau)\right]+ U_0(t).
\end{align*}  
Additionally, the figure depicts the time evolution of the formation error, as characterized by Eqs.~\eqref{eq:DDE_e0}--\eqref{eq:DDE_xi0}, 
%AA "also" -> "and"
and the ACS  for the error system~\eqref{eq:ACS-complex}.

For cases 1 to 3, we find that with increasing $\tau$ the system makes an interesting transition from a stable formation configuration to an unstable one and then back again to a stable formation.  
More explicitly, for $\tau=4.5$ (case 1), all agents asymptotically reach the desired formation configuration, see Fig.~\ref{Fig:Motion}(a). 
As $\tau$ increases and crosses a critical instability threshold and at $\tau=5.7$ (case 2), the desired formation configuration is not reached and the agents diverge. 
The agent trajectories transition to oscillations with exponentially increasing amplitudes and become increasingly unstable, see Fig.~\ref{Fig:Motion}(b).
Then, as $\tau$ continues to increase, the agents regain the ability to achieve the desired formation configuration after crossing a critical stability threshold as shown for $\tau=6.8$ (case 3) in Fig.~\ref{Fig:Motion}(c). 
This cycle repeats as $\tau$ continues to change. 
For case 4, the spectrum is in the absolutely stable region (i.e., class 0), it maintains a stable motion, and the desired formation can always be obtained, for all values of $\tau$, as shown in Fig.~\ref{Fig:Motion}(d). The converge of the trajectory has a square-wave pattern, often observed in DDEs with the type I continuous spectrum, see e.g. \cite{stohrSquareWavesBykov2023}.

\section{Stability of coupled agents}\label{sec:Coupled_Case}

We now study the case in which agents exchange information with each other, as well as with the virtual leader, see Fig.~\ref{Fig:coupled_example}.
\begin{figure}
\centering  
\includegraphics[width=2.8in,height=1.8in]{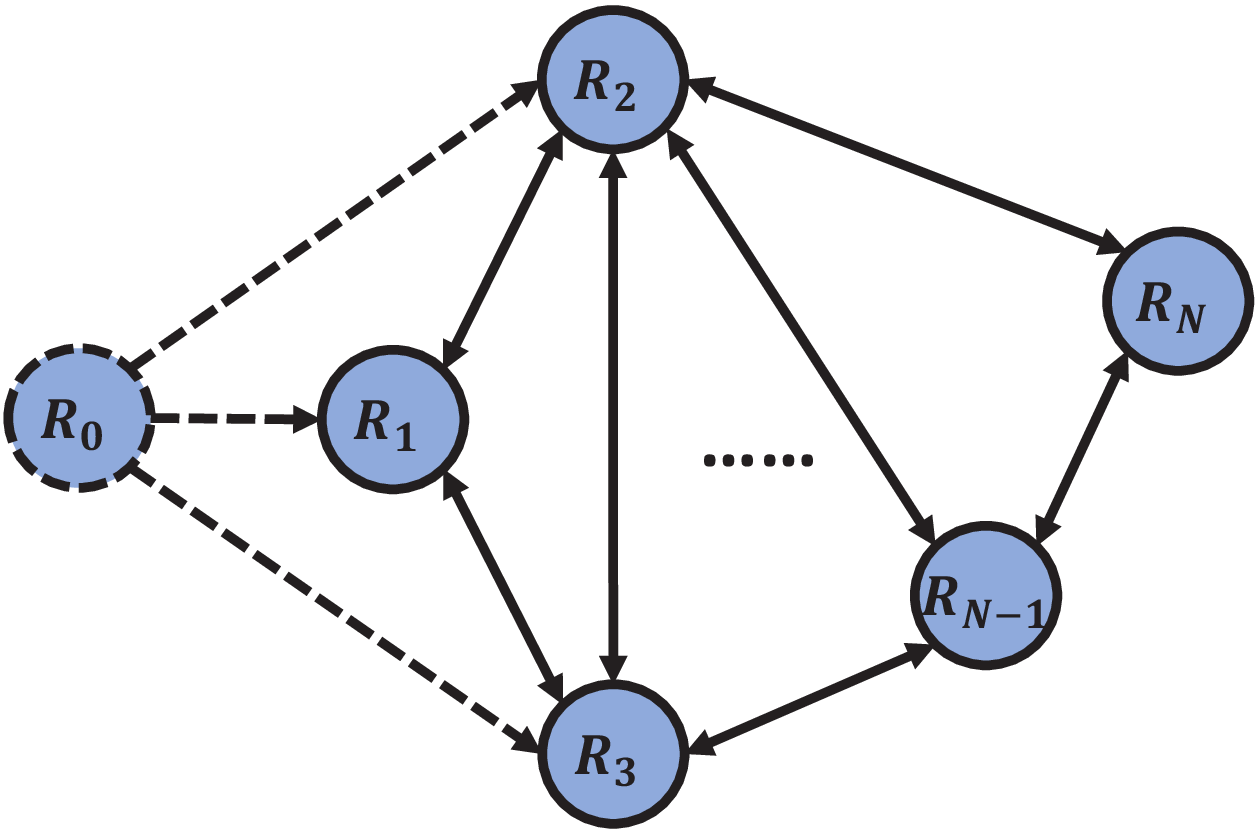}
    \caption{Schematic diagram of the coupled system. Active agents $R_1,\dots, R_N$ exchange information with each other, and receive information from the virtual leader.
\label{Fig:coupled_example}}
\end{figure}
In this case, the Laplacian  $\mathbf{L}$ in the control function \eqref{eq:controller} is not vanishing, leading to the the error system \eqref{eq:DDE-e}--\eqref{eq:DDE-xi}. 
Hence, the MSF approach from Sec.~\ref{sec:MSF} can be applied to assess the stability, and the stability problem is reduced via equation \eqref{Eq:eq:Gener_MSF-1} to
\begin{align}
\label{Eq:eq:Gener_MSF-1-again}
\dot x(t)= 
        (M -\lambda_\ell P) x(t)+(M^\tau -\lambda_\ell P^\tau) x(t-\tau),
\end{align}
where $\lambda_\ell$, $\ell=1,\dots,N$ are the eigenvalues of $\mathbf{L}$. 
In this way, the stability problem is reduced to the stability of each individual mode corresponding to different $\lambda_\ell$. 

The reduced system \eqref{Eq:eq:Gener_MSF-1-again} allows to describe the structure of the spectrum of the full error system based on the spectra of individual modes. 

\subsection{Stability of formation with symmetric coupling} 

We first consider the case of the reciprocal coupling (symmetrical topology) leading to real eigenvalues $\lambda_\ell$ of Laplacian $\mathbf{L}$. 
The main observation here is that equation \eqref{Eq:eq:Gener_MSF-1-again} has the same form as equation \eqref{Eq:error-uncoupled} for the system coupled only to the virtual leader, but the matrices $M$ and $M^\tau$ are replaced by $M-\lambda_\ell P$ and $M^\tau-\lambda_\ell P^\tau$, respectively. 
As a result, Theorem~\ref{The_4-1} can be applied to each coupling mode separately, and each mode can be classified as being in one of the universality classes 0, I, II, or U. 
Therefore, the spectrum of the whole system  \eqref{eq:DDE-e}--\eqref{eq:DDE-xi} is a union of the spectra of individual modes, and it can be described by the following notation $\text{0}_i \text{I}_j \text{II}_m \text{U}_s$, where $i,j,m,$  and $s$ denote the number of modes having the spectrum of class 0, I, II, or U, respectively, see Fig.~\ref{Fig:union_spectra} for illustration. 
\begin{figure}
\centering  
\includegraphics[width=13cm]{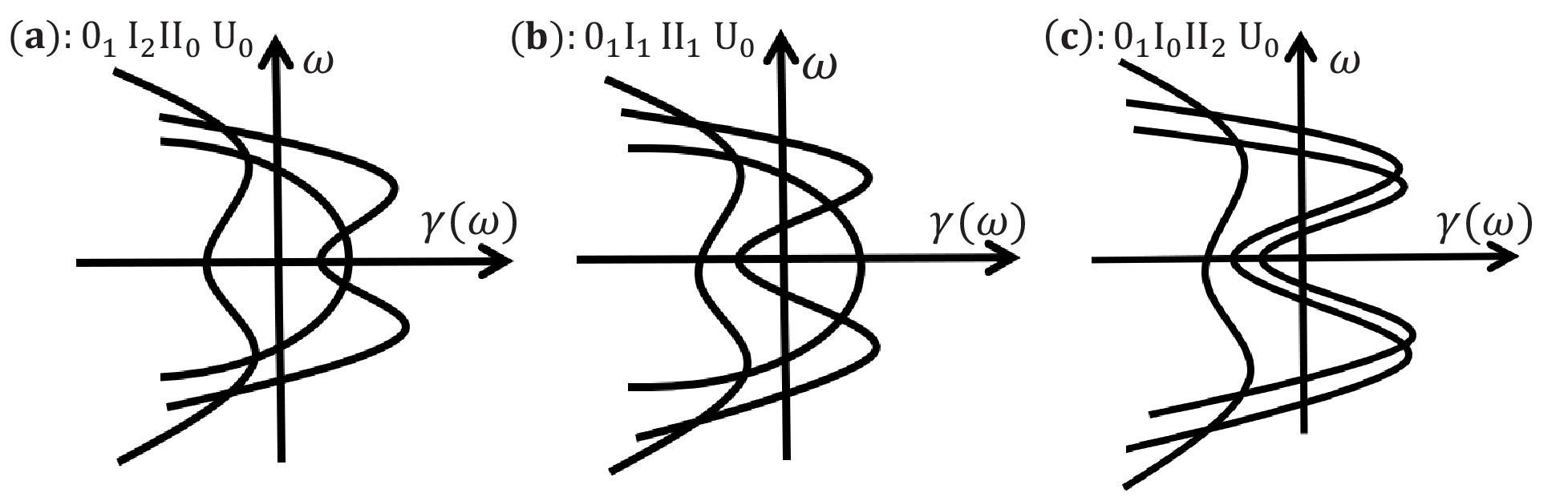}
    \caption{Schematic diagram of the asymptotic continuous spectrum of the universal class $\text{0}_i\text{I}_j\text{II}_m\text{U}_s$: 
    (a) class $\text{0}_1\text{I}_2\text{II}_0\text{U}_0$, 
    (b) class $\text{0}_1\text{I}_1\text{II}_1\text{U}_0$, 
    (c) class $\text{0}_1\text{I}_0\text{II}_2\text{U}_0$.
\label{Fig:union_spectra} }
\end{figure}

Let us analyze the special structure of the parameter dependency in system \eqref{Eq:eq:Gener_MSF-1-again}. 
For this, we use our notation $p_0=(k_0,h_0,k_0^\tau,h_0^\tau)$ and additionally define $\overline p:=(k,h,k^\tau,h^\tau)$. 
Since the matrices in \eqref{Eq:eq:Gener_MSF-1-again} are 
\begin{align*}
&M -\lambda_\ell P =-\left[\begin{array}{cccc}
		0& -1\\
		k_0+\lambda_\ell k& h_0+\lambda_\ell h
	\end{array}\right],\\
&M^\tau -\lambda_\ell P^\tau = -\left[\begin{array}{cccc}
		0& 0\\
		k^\tau_0+\lambda_\ell k^\tau& h^\tau_0+\lambda_\ell h^\tau
	\end{array}\right],
\end{align*}
we observe that system \eqref{Eq:eq:Gener_MSF-1-again} only depends on $p_0+\lambda_\ell \overline p$.
This means that the changes in the parameters $\lambda_\ell$ correspond to a line in the parameter space through the point $p_0$ and a direction given by $\overline p$. 
More specifically, the following corollary holds.
\begin{corollary}
\label{Cor:5-1}
     The following statements holds true for the coupled system \eqref{eq:DDE-e}--\eqref{eq:DDE-xi}. 

  \textnormal{(i)}
      The spectrum of eigenvalues of \eqref{eq:DDE-e}--\eqref{eq:DDE-xi} has the structure $\text{0}_i \text{I}_j \text{II}_m \text{U}_s$ with $i,j,m,s\ge 0$ and $i+j+m+s = N$. 

  \textnormal{(ii)}
      The system is absolutely stable if and only if $p_0 + \lambda_\ell \overline{p} \in S_0$ for all $\ell=1,\dots,N$, where $S_0$ is defined in \eqref{eq:S0}.

  \textnormal{(iii)}
      The system is unstable and hyperbolic if there exist $\ell$ such that  $p_0 + \lambda_\ell \overline{p} \in S_U$,  where $\ell=1,\dots,N$, and the set $S_U$ is defined  in Theorem \ref{The_4-1}(iv). 

\end{corollary}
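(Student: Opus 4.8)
The plan is to deduce all three statements from Theorem~\ref{The_4-1} combined with the block-diagonalization of Section~\ref{sec:MSF}. First I would observe that, assuming (as in Section~\ref{sec:MSF}) that $\mathbf{L}$ is diagonalizable, the change of variables $X(t)=(\mathbf{H}\otimes 1_2\otimes 1_3)^{-1}Z(t)$ is constant in $t$ and therefore commutes with both the instantaneous and the delayed terms of \eqref{eq:DDE-e}--\eqref{eq:DDE-xi}; hence, for each fixed $\tau\ge 0$, the characteristic function of the full error system factors as $\prod_{\ell=1}^{N}\det\bigl[\mu I-(M-\lambda_\ell P)-(M^\tau-\lambda_\ell P^\tau)e^{-\mu\tau}\bigr]^{3}$ (the exponent $3$ coming from the spatial factor $1_3$), so that the spectrum of \eqref{eq:DDE-e}--\eqref{eq:DDE-xi} is exactly the union, with multiplicity, of the spectra of the $N$ reduced modes \eqref{Eq:eq:Gener_MSF-1-again}. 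Next I would use the parameter reduction already recorded before the corollary: $M-\lambda_\ell P$ and $M^\tau-\lambda_\ell P^\tau$ have the same structure as the matrices $M$, $M^\tau$ of the uncoupled problem \eqref{Eq:error-uncoupled} but with the parameter vector $p_0$ replaced by $p_0+\lambda_\ell\overline{p}$, so that Theorem~\ref{The_4-1} applies verbatim to mode $\ell$ with parameters $p_0+\lambda_\ell\overline{p}$.

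With this reduction in place the three statements become bookkeeping. For (i) I would count: Theorem~\ref{The_4-1} assigns to each mode $\ell$ exactly one of the labels $0$, I, II, U according to whether $p_0+\lambda_\ell\overline{p}$ lies in $S_0$, $S_I$, $S_{II}$ or $S_U$, and setting $i,j,m,s$ equal to the number of modes in each class yields the structure $\text{0}_i\,\text{I}_j\,\text{II}_m\,\text{U}_s$ with $i+j+m+s=N$. For (ii) I would note that the full system is absolutely stable iff every root of every factor has negative real part for all $\tau\ge 0$, i.e.\ iff each reduced mode is absolutely stable, and by Theorem~\ref{The_4-1}(i) this is equivalent to $p_0+\lambda_\ell\overline{p}\in S_0$ for all $\ell=1,\dots,N$. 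For (iii) I would argue: if $p_0+\lambda_{\ell_0}\overline{p}\in S_U$ for some $\ell_0$, then by Theorem~\ref{The_4-1}(iv) mode $\ell_0$ is of class U, so for every $\tau>0$ its characteristic equation has a root with $\Re\mu>0$ and none with $\Re\mu=0$; this branch originates from the strongly unstable spectrum of mode $\ell_0$ (an eigenvalue of $M-\lambda_{\ell_0}P$ with positive real part) and stays strictly in the open right half-plane for all $\tau$, and by the reduction it belongs to the spectrum of the full system, which is thus unstable (with a persistent, hyperbolic unstable branch) for every $\tau>0$.

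I do not expect a serious obstacle here; the content is essentially an application of Theorem~\ref{The_4-1} mode by mode. The two points that will need care are: (a) making precise that the constant transformation $\mathbf{H}\otimes 1_2\otimes 1_3$ really does commute with the delay term, so that the spectrum of the coupled system is literally the disjoint union of the mode spectra --- and, for the non-symmetric coupling of later sections where $\mathbf{L}$ may fail to be diagonalizable, replacing the diagonalization by a Schur (or Jordan) form, so that the system becomes block-triangular and its spectrum is still the union of the diagonal-block spectra; and (b) the precise meaning of ``hyperbolic'' in (iii) for the whole system: the class-U mode never contributes an imaginary root, but a mode of class I or II could, at isolated delay values, have a purely imaginary root, so the hyperbolicity claim should be read as referring to the persistent right-half-plane branch supplied by the class-U mode (equivalently, it holds for all $\tau$ outside a discrete set, or under a genericity assumption on $p_0$, $\overline{p}$ and the eigenvalues $\lambda_\ell$). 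This last point, together with the tacit non-degeneracy assumption behind the exact equality $i+j+m+s=N$ in (i), is the only subtlety I would flag.
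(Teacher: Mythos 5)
Your proposal is correct and follows essentially the same route as the paper, which presents the corollary as an immediate consequence of the preceding discussion: the MSF block-diagonalization reduces the spectrum to the union of the $N$ mode spectra of \eqref{Eq:eq:Gener_MSF-1-again}, each mode depends only on $p_0+\lambda_\ell\overline{p}$, and Theorem~\ref{The_4-1} is applied mode by mode. Your caveat about statement (iii) --- that a coexisting class-I or class-II mode can place roots on the imaginary axis at isolated delays, so the hyperbolicity claim properly attaches to the persistent class-U branch --- is a fair refinement of a point the paper leaves implicit, but it does not change the argument.
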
 

Figure~\ref{Fig:space_P} illustrates how the stability and the spectrum structure in the coupled system can be obtained using Corollary~\ref{Cor:5-1} and the results for the uncoupled system. 
\begin{figure}
\centering  
\includegraphics[width=5.2in,height=2.1in]
{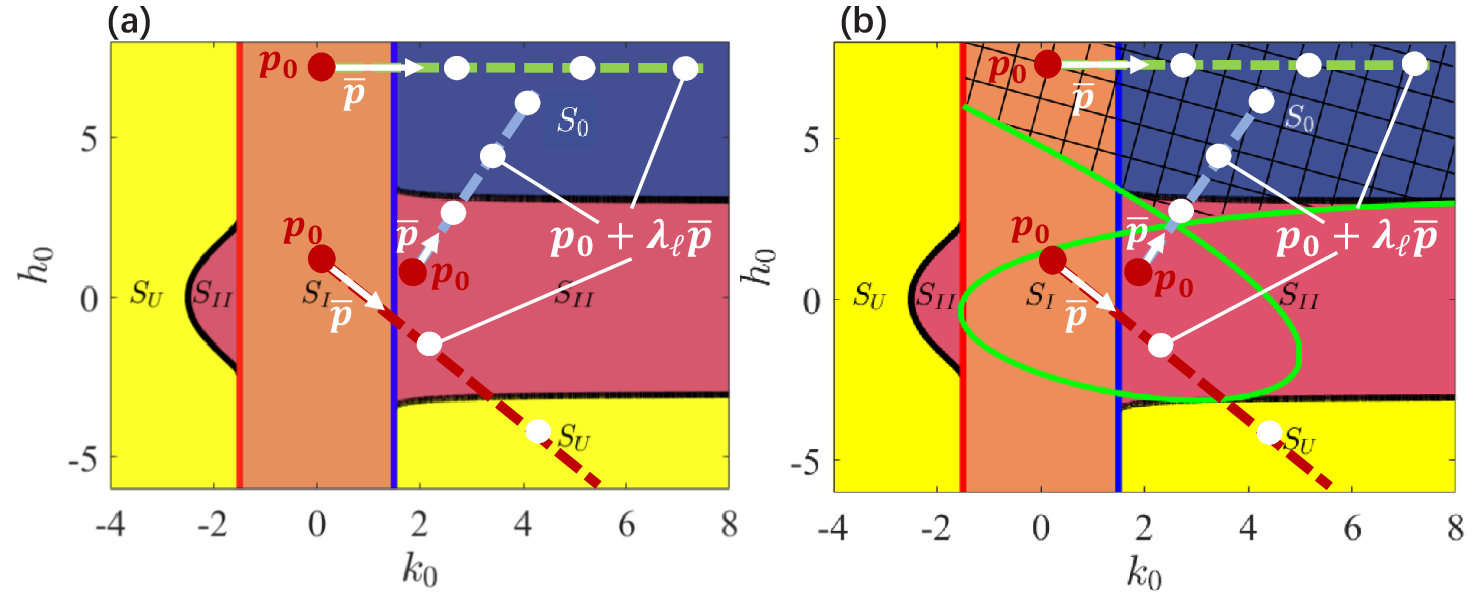}
    \caption{Illustration of the stability of the transverse modes for different parameter values and eigenvalues of the Laplacian matrix. 
    (a) Delay-independent case (large $\tau$); 
    (b) small delay case ($\tau = 2$). 
    Each dashed line is defined by $p_0 + \lambda_\ell \overline{p}$, where $p_0=(k_0,h_0,1.5,-3)$ and $\overline{p}=(k,h,0,0)$: 
    green  $p_0 = (0, 7, 1.5, -3)$, $\overline{p} = (2, 0, 0, 0)$; 
    blue $p_0 = (1.8, 0.5, 1.5, -3)$, $\overline{p} = (1, 1, 0, 0)$; 
    and red $p_0 = (0, 1, 1.5, -3)$, $\overline{p} = (2, -2, 0, 0)$.
\label{Fig:space_P}}
\end{figure}
The figure shows two cases: large and intermediate delays. 
Each dashed line is determined by the point $p_0=(k_0,h_0,1.5,-3)$ and the direction $\overline{p}=(k,h,0,0)$.
The parameter values for each transverse mode $\ell$ are given by $p_0 + \lambda_\ell \overline{p}$ and are indicated by white points in Fig.~\ref{Fig:space_P}. $\lambda_\ell$ are exemplary eigenvalues of the Laplacian matrix $\mathbf{L}$ \eqref{eq:L}, which correspond to the transverse modes. 

The points on the green dashed line of Fig.~\ref{Fig:space_P} correspond to the spectrum structure $\text{0}_3\text{I}_1\text{II}_0\text{U}_0$, i.e., all transverse modes are stable, but the mode with $\lambda_0=0$ is unstable. Hence, the agent formation is stable, while the trajectory can deviate from the virtual leader trajectory for large delay. 
As the delay becomes smaller ($\tau=2$) in Fig.~\ref{Fig:space_P}(b), all points including $p_0$ fall into the cross-hatched region, and therefore the formation configuration is stable and follows the virtual leader stably. 
Similarly, the blue dashed line illustrates how the transverse stability changes as the delay increases or decreases. The red dashed line provides an example where the formation configuration cannot be stabilized by any delay or coupling.

To illustrate the effect of different coupling formation configurations, we compute the bifurcation diagram in $\lambda$ and $h_0$ parameter space, see Fig.~\ref{Fig:Stable_Region_Coupled}. 
\begin{figure}
\centering	    
\includegraphics[width=5.2in,height=3in]{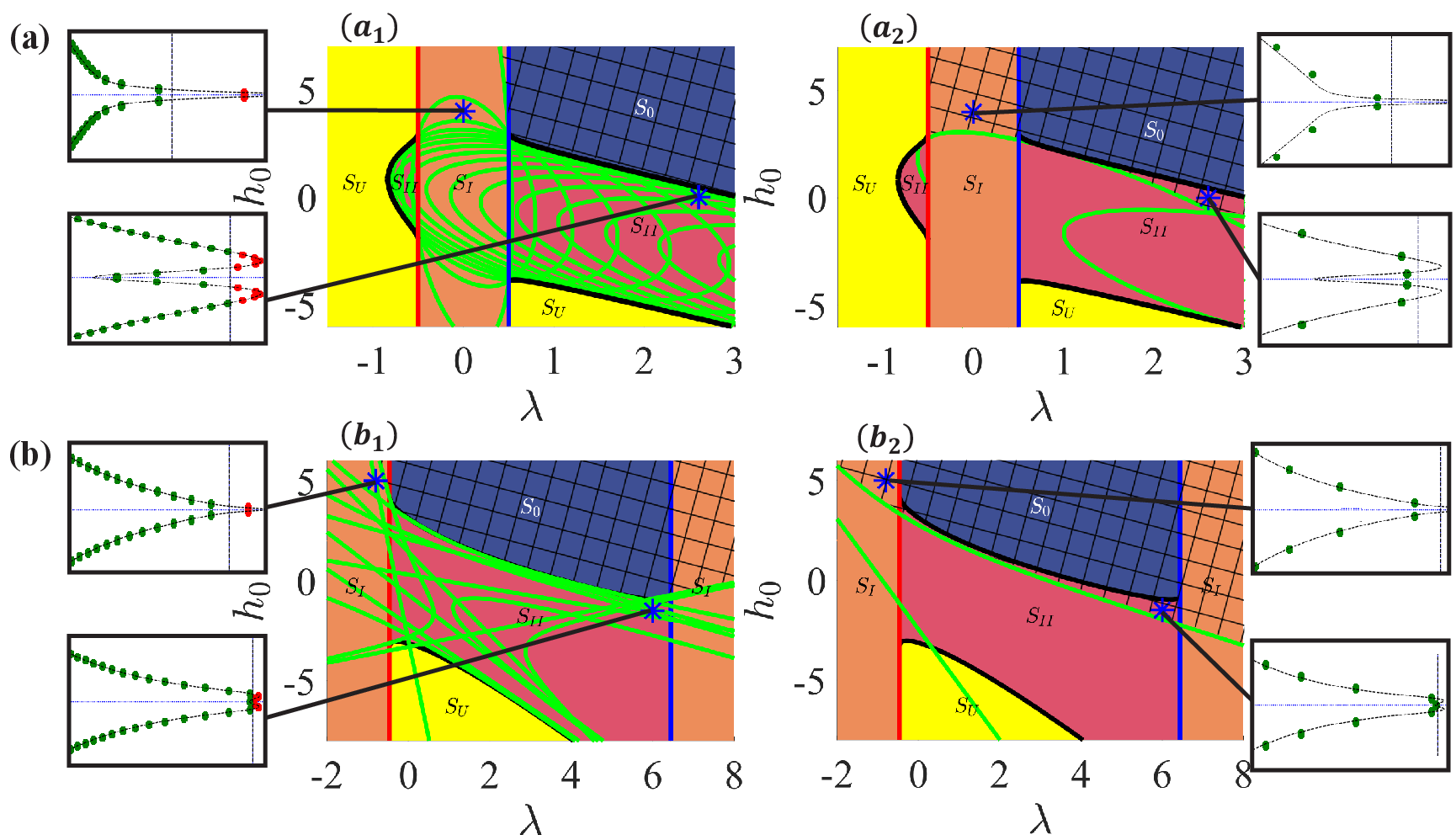}
    \caption{Bifurcation diagrams in the $(\lambda,h_0)$-plane are shown for different time delays:  
    $(a_1)~\tau=20$, $(a_2)~\tau=2$, 
    $ (b_1)~\tau=5$, and $(b_2)~\tau=1$. 
    The parameter values are fixed as follows: 
    (a) $p_0=(0,h_0,1.5,3)$, $\overline{p}=(3,1,0,0)$;  
    (b) $p_0=(3,h_0,1.5,3)$, $\overline{p}=(1.3,1,-2,0)$.
    The green lines indicate bifurcation curves. 
    Stability regions for fixed time delays are highlighted with the cross-hatched area.
    The sub-figures illustrate the spectra.
\label{Fig:Stable_Region_Coupled}}
\end{figure}
Since $\lambda$ are the eigenvalues of the Laplacian matrix $\mathbf{L}$, this takes into account different possible coupling formation configurations, i.e., $\lambda$ is the general expression for $\lambda_\ell$. 
Figures Fig.~\ref{Fig:Stable_Region_Coupled}(a) and (b) correspond to 
$k_0=0$ and $k_0=3$ respectively.
The stable active agent formations correspond to the cross-hatched area in the parameter space for fixed delays, and the blue $S_0$ region when stability is desired for arbitrary positive delays. 

The bifurcation lines create boundaries for the stability region, and are given by the explicit parametric expression
\begin{align}
&\lambda(\omega) = 
        \frac{\omega^2 - k_0 - h_0^\tau \omega \sin(\omega \tau) - k_0^\tau \cos(\omega \tau)}{k + h^\tau \omega \sin(\omega \tau) + k^\tau \cos(\omega \tau)},
\label{Eq:lambda_omega}\\
&h_0(\omega)=  
        \frac{\sin(\omega \tau)}{\omega} \left[ k_0^\tau  - \lambda(\omega) k^\tau  \right]   -  \left[ h_0^\tau +\lambda(\omega)h^\tau\right]\cos(\omega \tau)+ \lambda(\omega)h,
\label{Eq:h_0_omega}
\end{align}
which can be obtained from the following characteristic equation of the coupled system \eqref{Eq:eq:Gener_MSF-1-again}:
\begin{equation}
\label{eq:chareq_coupled}
         \det \left[ \mu I - (M-\lambda P) - (M^{\tau}-\lambda P^{\tau})  e^{-\mu\tau} \right] = 0,
\end{equation}
by substituting $\mu=i\omega$ and solving it with respect to the real eigenvalue $\lambda$ (the coupling is symmetric) and $h_0$.

Similarly to the bifurcation diagram in Fig.~\ref{Fig:Stable_Region}, the stability region becomes larger for small delays and shrinks to $S_0$ for large delays.
As a result, the system can also be stabilized in $S_I$ and $S_{II}$ for smaller delays.

\subsection{Stability of formation of non-symmetrically coupled agents}\label{Secsub_ContourMap}

We remind that stability of the active agent formation is described by system \eqref{Eq:eq:Gener_MSF-1-again}, where $\lambda_\ell$ are eigenvalues of the Laplacian matrix $\mathbf{L}$. 
The corresponding characteristic equation is \eqref{eq:chareq_coupled}.

For non-symmetrically coupled agents, $\lambda_{\ell}$ are generally complex.
Therefore, it is convenient to represent the stability region in the complex plane for $\lambda\in \mathbb{C}$. 
From the characteristic equation \eqref{eq:chareq_coupled}, we find
\begin{equation}
\label{eq:Characteristic_lambda_mu}
        \mu^2+h_0\mu+k_0+\left(h^\tau_0\mu+k^\tau_0\right)e^{-\mu\tau}+\lambda\left[\mu h+k+\left(\mu h^\tau+k^\tau\right)e^{-\mu\tau}\right]=0.
\end{equation}
Then, the explicit expression for $\lambda$ is
\begin{equation}
\label{Eq:eq:lambd_mu}
\lambda(\mu)=
        \frac{-\mu^2-\mu h_0-k_0-\left(\mu h^{\tau}_0+k_0^{\tau}\right)e^{-\mu\tau}}{\mu h +k+\left(k^{\tau}+\mu h^{\tau}\right) e^{-\mu\tau}},
\end{equation}
which can be used to find the bifurcation curves and the stability region in the $\lambda$ complex plane parametrically. 
Specifically, when the eigenvalues $\mu$ are purely imaginary, this corresponds to $\mu=i\omega$, $\omega\in\mathbb{R}$, and the corresponding curves in the parameter space are given parametrically as $\lambda(i\omega)$, $\omega\in\mathbb{R}$. 

By plotting the bifurcation curves as outlined above, we obtain the boundary of the stability region, which is illustrated by the green lines in Figs.~\ref{Fig:MSF_curve}(a) and (b) for two sets of parameters.
\begin{figure}
\centering	
  \subfigure{\includegraphics[width=6.3cm]{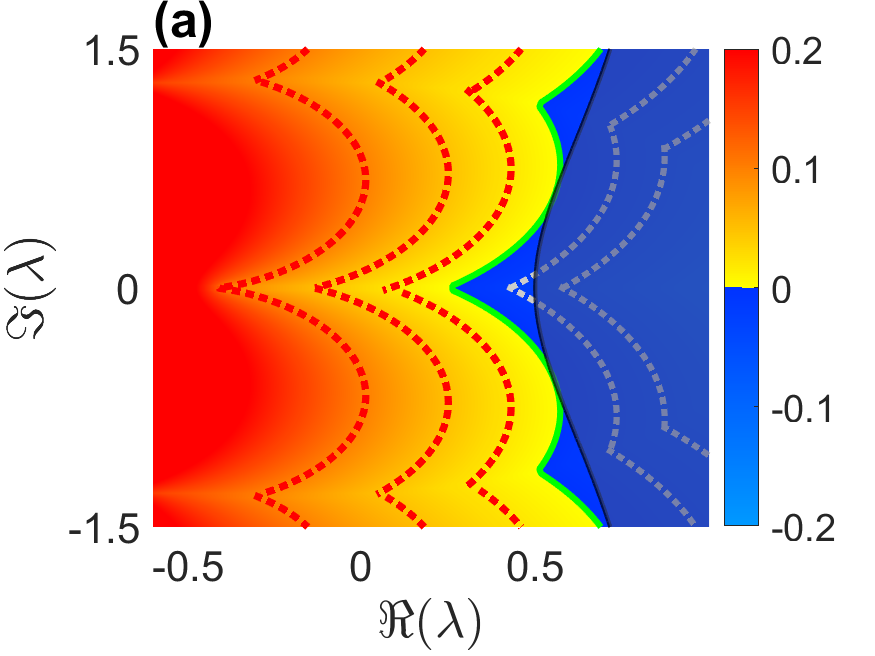}}
  \subfigure{\includegraphics[width=6.3cm]{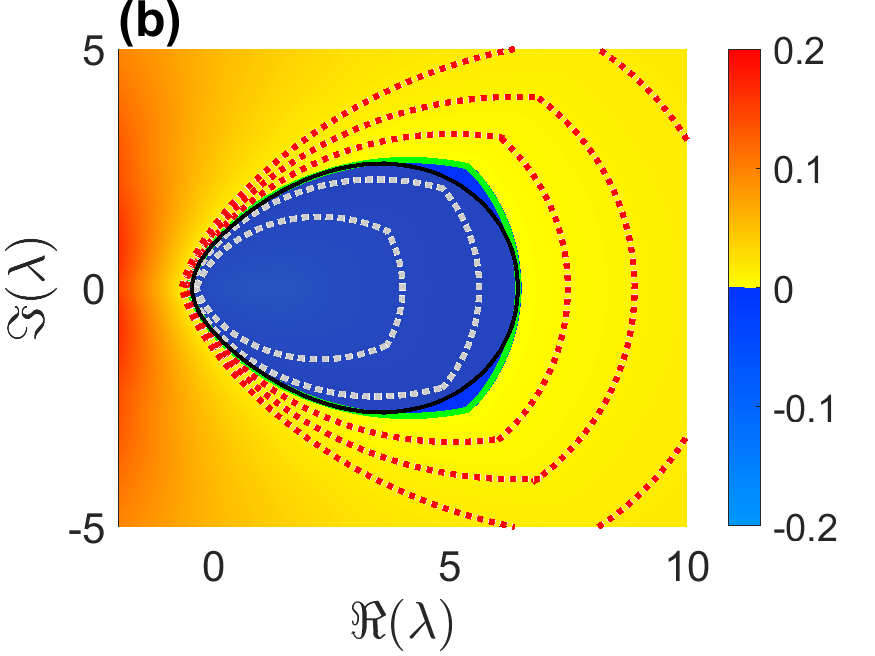}}
  \subfigure{\includegraphics[width=6.0cm]{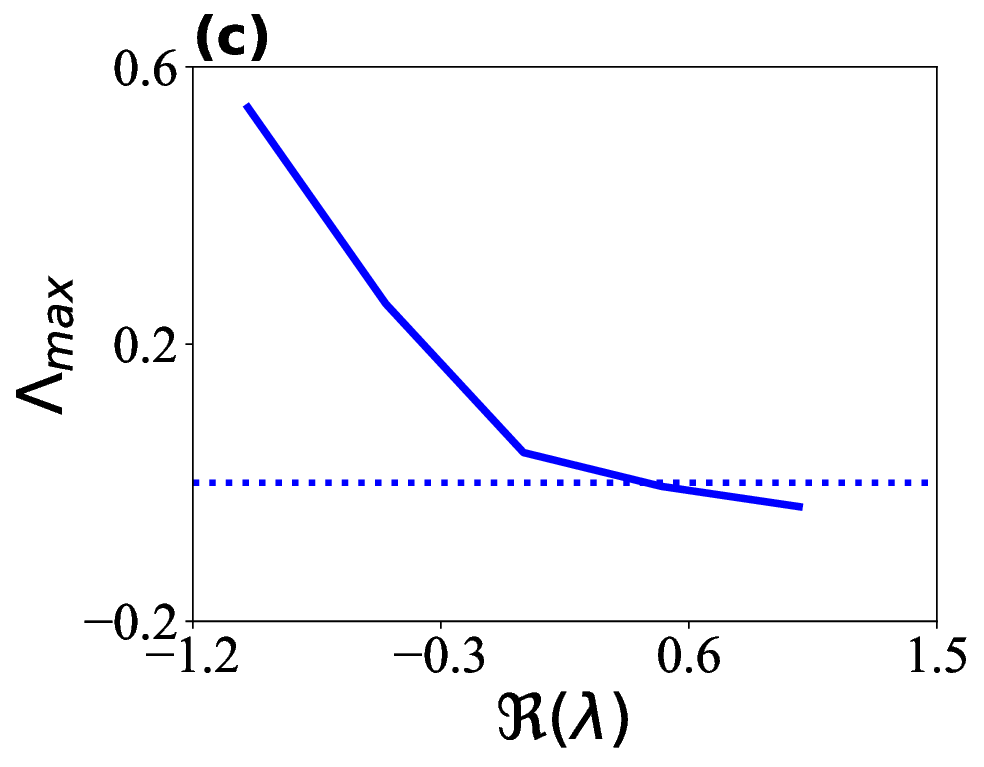}}
  \subfigure{\includegraphics[width=6.0cm]{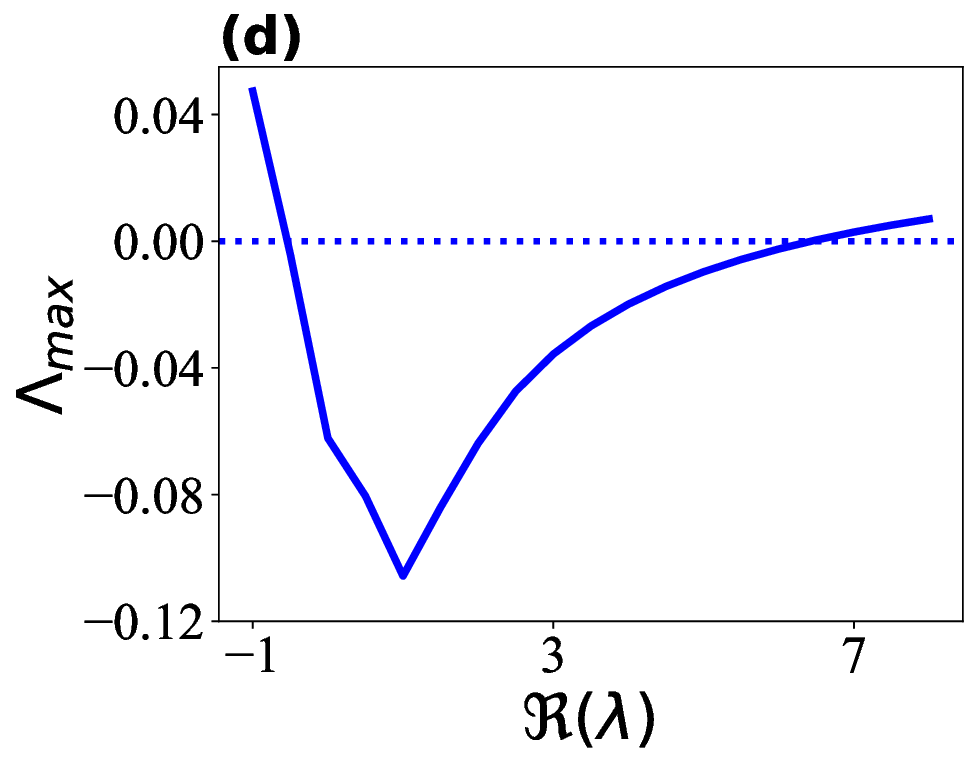}}
    \caption{ Master stability function $\Lambda_{\max}=\max ( \mathrm{Re}\, \mu )$ for delay-coupled system  \eqref{Eq:eq:Gener_MSF-1-again}. 
    (a) and (c) with fixed parameters $p_0=(0,6,1.5,3)$, $\overline{p}=(3,1,0,0)$;
    (b) and (d) with fixed parameters  $p_0=(3,6,1.5,3)$, $\overline{p}=(1.3,1,-2,0)$. 
    For (a) and (b), regions belonging to absolute stability are colored dark blue, the lines in the figures represent contour lines.
    The curve where $\Lambda_{\max}=0$ is given as green lines.  
    The red and gray dashed lines correspond to $\Lambda_{\max}>0$ and $\Lambda_{\max}<0$, respectively. 
    (c) and (d) show how the largest Lyapunov exponent $\Lambda_{\max}$ varies with $\Re(\lambda)$ and fixed $\Im(\lambda)=0$.
\label{Fig:MSF_curve} }
\end{figure}
Additionally, Figs.~\ref{Fig:MSF_curve}(a) and (b) show the 
largest Lyapunov exponent, defined as $\Lambda_{\max}=\max_j ( \Re (\mu_j) )$, as a color map. 
This is computed numerically directly from equation \eqref{eq:chareq_coupled}. 
It is clear that the stability boundary $\Lambda_{\max}=0 $ corresponds to the analytical green bifurcation line.
Furthermore, the dark blue colored areas correspond to regions of absolute stability. 
Different parameters lead to different shapes of the stable regions, as can be seen in Figs.~\ref{Fig:MSF_curve}(a) and (b).
Furthermore, the red and gray dashed lines in the figures show the contour lines for $\Lambda_{\max}$, which are obtained by setting $\mu = \text{const} + i\omega$ into Eq.~\eqref{Eq:eq:lambd_mu} and plotting $\lambda(\text{const} + i\omega)$. 
Detailed parameterized curves for the contour lines on the contour map are shown in \cref{Appendix_A}.

Figures~\ref{Fig:MSF_curve}(c) and (d) show how the largest Lyapunov exponent $\Lambda_{\max}$ varies with $\Re(\lambda)$, where  $\Im(\lambda)=0$ is fixed.
% In Fig.~\ref{Fig:MSF_curve}(c), the curve starts at $\Lambda_{\max}>0$, $\Lambda_{\max}$ crosses the stability threshold and remains negative as $\Re(\lambda)$  increases. 
% This indicates that a stronger coupling increases the stability. 
% In contrast, in Fig.~\ref{Fig:MSF_curve}(d), the curve starts at $\Lambda_{\max}>0$, as $\Re(\lambda)$ increases, $\Lambda_{\max}$ first crosses the stability threshold and then $\Lambda_{\max}$ reaches the instability threshold, becoming positive again. 

In the remaining part of this section, we will consider the case of $M^\tau = P = 0$ and a large delay limit.
This corresponds to a scenario in which the delay terms occur exclusively in the interaction, and the local feedback is non-delayed ($k=h=k_0^\tau=h_0^\tau=0$). 
The properties of the MSF in this case are reported in \cite{flunkert2010synchronizing}, and a notable feature is that the stability region asymptotically becomes a circle around the origin in the $\lambda$-plane as the delay increases. 
We demonstrate this for our system when $M^\tau = P = 0$, providing the first rigorous proof of this property. 
Moreover, we calculate the first-order correction terms that cause the bifurcation curve to deviate from a circular shape. 
To the first approximation, it has a rotated teardrop shape as in Fig.~\ref{Fig:MSF_differ_delay}. 
\begin{figure}
\centering	
\includegraphics[width=15cm]{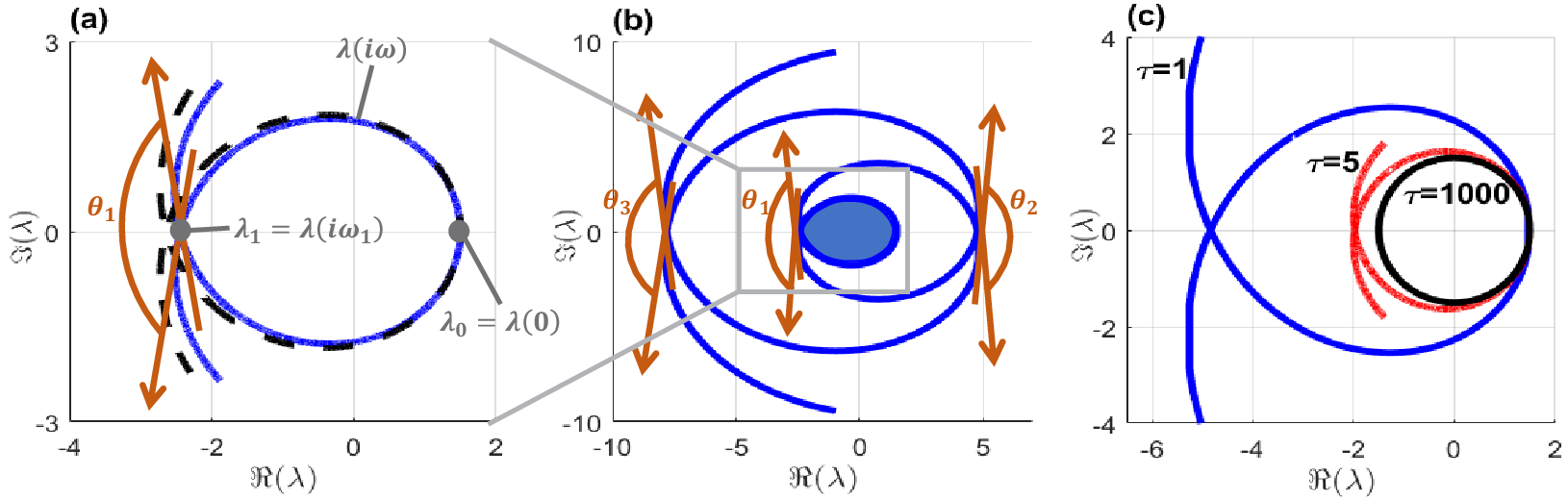}
    \caption{Illustration of the bifurcation curve in the $\lambda$-complex plane described by Theorem~\ref{Thm:Rotation_Symmetry}. 
    Panel (a) shows the bifurcation curve $\lambda(i\omega)$ with $\tau=3$, which is a partial zoom-in from panel (b), displaying details of the first self-intersection and the properties outlined in Theorem~\ref{Thm:Rotation_Symmetry}. 
    Panel (b) shows the bifurcation curve for $\tau=3$, illustrating the offset angles $\theta_j$ at self-intersection points. 
    Panel (c) shows bifurcation curves for various time delays: $\tau=1$ (blue), $\tau=5$ (red), $\tau=1000$ (black). 
    The dashed line in (a) corresponds to the asymptotic approximation curves given by Eq.~\eqref{Eq:Lambda_beta}. 
    The solid lines represent the exact  bifurcation curve described by Eq.~\eqref{Eq:eq:lambd_mu}. 
    The arrowed lines show the tangential line and its direction at the self-intersection point. 
    The parameters are fixed as  $p_0=(3,6,0,0)$ and $\overline{p}=(0,0,-2,0)$.  
\label{Fig:MSF_differ_delay}}
\end{figure}

% The following theorem shows, how the first-order correction terms and the offset angles at different self-intersection points, can be calculated for the bifurcation curves under the condition $M^\tau=P=0$.
\begin{theorem}
\label{Thm:Rotation_Symmetry}
  Let $M^\tau = P = 0$, $k_0 > 0$, $h_0>0$, and $k^\tau\ne 0$. Then the bifurcation curve $\lambda(i\omega)$ defined by Eq.~\eqref{Eq:eq:lambd_mu} has the following properties for sufficiently large $\tau$ and $|\omega\tau|<T$ with some $T>0$ independent on $\tau$.
  
      \textnormal{{(i)}}
        The following asymptotic representation holds
        \begin{align}
        \label{Eq:Lambda_beta}
        \lambda\left(i\omega\right)=
            \lambda_0e^{i\omega\tau} + i\omega \frac{k_0}{k^\tau}\left(\frac{h^\tau}{k^\tau}-\frac{h_0}{k_0}\right)e^{i\omega\tau}+ \mathcal{O}\left(\frac{1}{\tau^2}\right),
        \end{align}
        where $\lambda_0=\lambda(0)=-k_0/k^\tau$.
    
    \textnormal{{(ii)}}
        The curve $\lambda(i\omega)$  has self-intersection points at $\lambda(i \omega_j) = \lambda(-i \omega_j)\in \mathbb{R}$, where
        \begin{align*}
         \omega_j = 
            \frac{\pi j}{\tau}\left(1 + \frac{1}{\tau} \left(\frac{h^\tau}{k^\tau}-\frac{h_0}{k_0}\right)\right) + \mathcal{O}\left( \frac{1}{\tau^3}\right),\quad j\in\mathbb{Z}.   
        \end{align*}
        
    \textnormal{{(iii)}}
        The angles $\theta_j$ between tangential vectors of the curve at the self-intersection points $\lambda\left({i\omega_j} \right)$ are given by
        \begin{align}
        \label{Eq:Shift_angle}
        \theta_j=
            \pi+2j\pi\left(\frac{h^\tau}{k^\tau}-\frac{h_0}{k_0}\right)\frac{1}{\tau}+\mathcal{O}\left(\frac{1}{\tau^2}\right).
        \end{align}
        
    \textnormal{{(iv)}}
        The stability region of the MSF defined as $\Lambda(\lambda) := \max (\Re(\mu(\lambda)))$ with $\max$ taken over all characteristic roots $\mu$ of equation \eqref{eq:chareq_coupled} is confined to the connected region containing the origin. 

\end{theorem}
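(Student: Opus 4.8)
The plan is to prove (i)--(iv) in order, since the last three all rest on the expansion in (i).

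\textbf{For (i)}, setting $M^\tau=P=0$ (equivalently $h=k=k_0^\tau=h_0^\tau=0$) in \eqref{Eq:eq:lambd_mu} collapses it to $\lambda(\mu)=R(\mu)e^{\mu\tau}$ with $R(\mu)=-(\mu^2+h_0\mu+k_0)/(k^\tau+h^\tau\mu)$, a rational function whose only pole $\mu=-k^\tau/h^\tau$ is bounded away from $0$. On the range $|\omega\tau|<T$ we have $|\omega|<T/\tau\to 0$, so I would Taylor-expand $R$ about $\mu=0$, with $R(0)=-k_0/k^\tau=\lambda_0$, $R'(0)=\frac{k_0}{k^\tau}\big(\frac{h^\tau}{k^\tau}-\frac{h_0}{k_0}\big)$, and a remainder that is $O(\mu^2)$ uniformly on a fixed disc avoiding the pole; since $|e^{i\omega\tau}|=1$, multiplying by $e^{i\omega\tau}$ leaves this order unchanged and produces \eqref{Eq:Lambda_beta}. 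This step is routine.

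\textbf{For (ii)}, the coefficients of $\lambda(\mu)$ are real, so $\lambda(-i\omega)=\overline{\lambda(i\omega)}$ and $\lambda(i\omega_j)=\lambda(-i\omega_j)$ is equivalent to $\lambda(i\omega_j)\in\mathbb{R}$, i.e.\ $\arg\lambda(i\omega_j)\equiv 0\pmod\pi$. Substituting (i) and using $R'(0)/\lambda_0=-(\frac{h^\tau}{k^\tau}-\frac{h_0}{k_0})$ gives $\arg\lambda(i\omega)=\arg\lambda_0+\omega\tau-\omega(\frac{h^\tau}{k^\tau}-\frac{h_0}{k_0})+O(1/\tau^2)$; since $\arg\lambda_0\in\{0,\pi\}$, the condition becomes $\omega_j\tau\,[1-\frac1\tau(\frac{h^\tau}{k^\tau}-\frac{h_0}{k_0})]=\pi j+O(1/\tau^2)$, and inverting the bracket yields the stated $\omega_j$. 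I would also record that for $|\omega\tau|<T$ only finitely many $j$ arise, and that for $j\neq 0$ one has $\omega_j\neq-\omega_j$ and $\frac{d}{d\omega}\lambda(i\omega)|_{\omega_j}\neq 0$ (from (iii)), so each such value is a genuine transversal self-intersection, not a tangency of the curve with the real axis.

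\textbf{For (iii)}, I would differentiate the exact product, $\frac{d}{d\omega}\lambda(i\omega)=i\,(R'(i\omega)+\tau R(i\omega))\,e^{i\omega\tau}$, Taylor-expand $R$ and $R'$ about $0$, and substitute $\omega_j$ from (ii). To leading order $\frac{d}{d\omega}\lambda(i\omega)|_{\omega_j}=i\tau\lambda_0 e^{i\omega_j\tau}(1+o(1))$ is purely imaginary, so the two strands cross almost tangentially and both the value $\pi$ and the $O(1/\tau)$ correction in \eqref{Eq:Shift_angle} arise only from the $O(1)$ and $O(1/\tau)$ terms of the expansion. The symmetry $\frac{d}{d\omega}\lambda(i\omega)|_{-\omega_j}=-\overline{\frac{d}{d\omega}\lambda(i\omega)|_{\omega_j}}$ then expresses $\theta_j$ through $\arg\frac{d}{d\omega}\lambda(i\omega)|_{\omega_j}$ alone, which I would compute to the needed order, the displacement $\omega_j-\pi j/\tau$ of (ii) feeding into the first-order part. \emph{This is the step I expect to be the main obstacle}: tangent-vector terms of sizes $O(\tau)$, $O(1)$ and $O(1/\tau)$ must all be retained, several near-cancellations carefully tracked, and the branch $\arg\lambda_0\in\{0,\pi\}$ (the sign of $\lambda_0=-k_0/k^\tau$) kept in view throughout.

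\textbf{For (iv)}, I would invoke the ACS theory of Section~\ref{sec:ACS}. With $M^\tau=P=0$ the reduced equation \eqref{Eq:eq:Gener_MSF-1} has $A=M$ and $B=-\lambda P^\tau$; since $k_0,h_0>0$ the eigenvalues of $A$ (roots of $\mu^2+h_0\mu+k_0$) lie in the open left half-plane, so the strongly unstable spectrum is empty for every $\lambda$. By the large-delay spectral approximation \cite{Lichtner2011,Sieber2013}, for large $\tau$ the mode is then asymptotically stable when its single ACS curve stays in the open left half-plane and unstable once that curve enters the open right half-plane. A short computation of the generating polynomial and its root $Y(\omega)$ gives $\gamma(\omega)=-\ln|Y(\omega)|=\ln(|\lambda|/|R(i\omega)|)$, so $|\lambda|<r_{\min}:=\min_{\omega\in\mathbb{R}}|R(i\omega)|$ keeps the ACS in the open left half-plane while $|\lambda|>r_{\min}$ sends it into the open right half-plane; here $r_{\min}>0$ because $k_0,h_0>0$ prevent $R$ from vanishing on the imaginary axis. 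Hence every stable $\lambda$ has $|\lambda|\le r_{\min}$. The stability region is open and disjoint from the curve $\{\lambda(i\omega)\}$ (crossing it puts a characteristic root on the imaginary axis), hence a union of components of $\mathbb{C}\setminus\{\lambda(i\omega)\}$; since $|\lambda(i\omega)|=|R(i\omega)|\ge r_{\min}$, the open disc $\{|\lambda|<r_{\min}\}$ meets no point of the curve, so it lies in the component containing $\lambda=0$ (itself stable because $k_0,h_0>0$), and because the boundary of any such component is contained in the curve, every stable $\lambda$ — lying off the curve and in the closure of that disc — lies in the same component. Thus the stability region is confined to it. The one non-routine ingredient is the large-$\tau$ equivalence of stability with the ACS lying in the open left half-plane, for DDEs with complex coefficient matrices and empty strongly unstable spectrum, as supplied by \cite{Lichtner2011,Sieber2013,Yanchuk2022b}.
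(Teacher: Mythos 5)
Your parts (i), (ii) and (iv) are sound. For (i) you Taylor-expand the explicit product $\lambda(i\omega)=R(i\omega)e^{i\omega\tau}$, $R(\mu)=-(\mu^2+h_0\mu+k_0)/(h^\tau\mu+k^\tau)$, directly about $\mu=0$; the paper instead reaches \eqref{Eq:Lambda_beta} via an eigenvector-perturbation ansatz $\lambda_\omega=\lambda_0e^{i\omega\tau}+\tau^{-1}e^{i\omega\tau}\beta_\omega$ and a Fredholm solvability condition with the left null vector of $D_0$. Your route is more elementary and yields the same coefficient $R'(0)$. Part (ii) is the same computation as the paper's (realness of $\lambda(i\omega)$, expansion of the resulting transcendental equation in $1/\tau$), only phrased through $\arg\lambda$ instead of $\tan(\omega\tau)$. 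For (iv) you in fact prove more than the paper does: the paper only argues that the curves encircle the stable point $\lambda=0$, which by itself does not exclude stable components far from the curve, whereas your ACS bound $|\lambda|\le r_{\min}=\min_\omega|R(i\omega)|$ closes that loophole (modulo making the large-$\tau$ instability statement uniform in $\lambda$).

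The genuine problem is (iii), and it sits exactly where you flagged the obstacle. Carrying your own plan to the stated precision, with $c:=\frac{h_0}{k_0}-\frac{h^\tau}{k^\tau}$ one gets $R(i\omega)=\lambda_0(1+ic\omega)+\mathcal{O}(\omega^2)$, $\omega_j=\pi j/(\tau+c)+\mathcal{O}(\tau^{-3})$, and
\begin{equation*}
\left.\frac{d\lambda(i\omega)}{d\omega}\right|_{\omega_j}=\lambda_0(-1)^j\bigl[i(\tau+c)-c\pi j\bigr]\Bigl[1-\frac{i\pi jc}{\tau}\Bigr]+\mathcal{O}\bigl(\tau^{-1}\bigr)=\lambda_0(-1)^j\,i(\tau+c)+\mathcal{O}\bigl(\tau^{-1}\bigr):
\end{equation*}
the $\mathcal{O}(1)$ real contribution $-\lambda_0 c\pi j$ coming from expanding $R$ at $\omega_j$ is cancelled \emph{exactly} by the $\mathcal{O}(1)$ real contribution produced by the $\mathcal{O}(1/\tau)$ phase of $e^{i\omega_j\tau}$, which is forced by the very formula for $\omega_j$ you establish in (ii). Hence $\cot\alpha_j=\mathcal{O}(\tau^{-2})$ and $\theta_j=\pi+\mathcal{O}(\tau^{-2})$; the $1/\tau$ term in \eqref{Eq:Shift_angle} is absent. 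The paper's own derivation obtains a nonzero $1/\tau$ coefficient only because it replaces $e^{i\omega_j\tau}$ by $(-1)^j$ and thereby drops the second $\mathcal{O}(1)$ term. A numerical check with $k_0=3$, $h_0=6$, $k^\tau=-2$, $h^\tau=0$, $\tau=100$, $j=1$ gives $\cot\alpha_1\approx 1.0\times10^{-3}$ (consistent with an $\mathcal{O}(\tau^{-2})$ scaling), versus $j\pi\bigl(\frac{h^\tau}{k^\tau}-\frac{h_0}{k_0}\bigr)\tau^{-1}\approx-6.3\times10^{-2}$. So your step (iii), executed correctly, will not reproduce \eqref{Eq:Shift_angle}; it will show that the claimed first-order offset of the crossing angle vanishes, and the statement itself needs to be corrected before it can be proved.
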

The geometric structure described by the theorem is illustrated in  Fig.~\ref{Fig:MSF_differ_delay}. 

\begin{proof}
Under the condition $M^{\tau}=P=0$, the characteristic equation \eqref{eq:chareq_coupled}  is reduced to 
\begin{align}
\label{eq:cheqtheorem}
        \det \left[ \mu I - M +\lambda P^{\tau}  e^{-\mu\tau}  \right] = 0.
\end{align}
For $\mu=0$, equation \eqref{eq:cheqtheorem} reads 
\begin{align}
\label{eq:cheqtheoremmu0}
    \det \left[  - M +\lambda P^{\tau} \right] = 0.
\end{align}
Let $\lambda_0$ and $\boldsymbol{\upsilon}_0 $ be the corresponding eigenvalue and the eigenvector so that 
\begin{align*}
    \left[ - M +\lambda_0 P^{\tau}   \right]\boldsymbol{\upsilon}_0 = D_0 \boldsymbol{\upsilon}_0 = 0.
\end{align*}
A direct calculation gives
\begin{align}
\label{eq:lambda0v0}
\lambda_{0}=
        -\frac{k_{0}}{k^{\tau}},\quad
\boldsymbol{\upsilon}_{0}=
        \left[
        \begin{array}{c}
            1\\0
        \end{array}
        \right].
\end{align}
Now consider the case $\mu=i\omega~(\omega\ne 0)$. Then condition \eqref{eq:cheqtheorem} is equivalent to
\begin{align}
\label{Eq:Character_lambda_gamma}
        \left[i\omega I -  M+\lambda_{\omega} P^{\tau} e^{-i\omega\tau}\right]\boldsymbol{\upsilon}_{\omega}= 0,
\end{align}
where  $\boldsymbol{\upsilon}_{\omega}$ is the right null eigenvector of matrix $D_{\omega}=i\omega I - M +\lambda_{\omega} P^{\tau}  e^{-i\omega\tau} $, and $\lambda_\omega=\lambda\left(i\omega\right)$.
We apply the following transformations
\begin{align}
\label{Eq:lambda_gamma}
\lambda_{\omega}:=
  \lambda\left(i\omega\right)=                
        \lambda_0e^{i\omega\tau}+\frac{1}{\tau}e^{i\omega\tau} \beta_{\omega},
\end{align}
\begin{align}
\label{Eq:v_gamma}
  \boldsymbol{\upsilon}_{\omega}:=
    \boldsymbol{\upsilon}\left(i\omega\right)=
        \boldsymbol{\upsilon}_{0}+\frac{1}{\tau}\tilde{\boldsymbol{\upsilon}}_{\omega},
\end{align}
where $\beta_{\omega}$ and $\tilde{\boldsymbol{\upsilon}}_{\omega}$ are new unknown function and a vector, which describes the perturbation terms for $\lambda_\omega$ and $\boldsymbol{\upsilon}_\omega$.
By substituting Eqs.~\eqref{Eq:lambda_gamma} and \eqref{Eq:v_gamma} into Eq.~\eqref{Eq:Character_lambda_gamma}, we obtain
\begin{align}
\label{Eq:lambda_gamma_2}
        \left[i\omega I - M +\lambda_0P^{\tau}  +\frac{1}{\tau}P^\tau\beta_\omega     \right]\left(\boldsymbol{\upsilon}_{0}+\frac{1}{\tau}\tilde{\boldsymbol{\upsilon}}_{\omega}\right) = 0.
\end{align}
Considering the leading order terms $\mathcal{O}(1/\tau)$ of \eqref{Eq:lambda_gamma_2}, we get
\begin{align}
\label{Eq:beta_gamma}
D_0\tilde{\boldsymbol{\upsilon}}_{\omega}=
        - (i\omega\tau I+\beta_\omega P^\tau) \boldsymbol{\upsilon}_0.
\end{align}
Equation~\eqref{Eq:beta_gamma} is solvable if and only if $\left(i\omega\tau I+\beta_\omega P^\tau\right)\boldsymbol{\upsilon}_0\in \text{Im}(D_0)$, or, equivalently
\begin{align}
\label{eq:ker}
        \left(i\omega\tau I+\beta_\omega P^\tau\right)\boldsymbol{\upsilon}_0\,\,\bot \,\ker\left(D_0^T\right).
\end{align}
The condition \eqref{eq:ker} can be rewritten using the left null eigenvector $\boldsymbol{\upsilon}^{+}_0 = \left[h_{0}-h^{\tau}\frac{k_{0}}{k^{\tau}} , 1\right]$ of the matrix $D_0$, which satisfies $\boldsymbol{\upsilon}^{+}_0 D_0=0$. 
As a result, the solvability condition for \eqref{Eq:beta_gamma} leads to the equation
\begin{align}
\label{eq:solvability1}
        \boldsymbol{\upsilon}^{+}_0 (i\omega\tau I+\beta_\omega P^\tau)\boldsymbol{\upsilon}_0=i\omega\tau \left(h_0-\frac{k_0h^\tau}{k^\tau}\right)+k^{\tau}\beta_{\omega}=0,
\end{align}
which can be solved with respect to $\beta_\omega$:
\begin{align}
\label{Eq:bets_gamma}
\beta_\omega=
        -i\omega\tau\frac{\boldsymbol{\upsilon}^{+}_0\boldsymbol{\upsilon}_0}{\boldsymbol{\upsilon}^{+}_0P^\tau\boldsymbol{\upsilon}_0}=
         -i\omega\tau\left(\frac{h_0k^\tau-k_0h^\tau}{(k^\tau)^2}\right).
\end{align}
By substituting \eqref{Eq:bets_gamma} into \eqref{Eq:lambda_gamma}, we yield \eqref{Eq:Lambda_beta}. This completes the proof of the statement (i) of the theorem.

Next, we determine the offset angle at which the bifurcation curve self-intersects, i.e., at the point where $\lambda(i\omega)=\overline{\lambda(i\omega)}$.
Due to $k=h=k_0^\tau=h_0^\tau=0$, Eq.~\eqref{Eq:eq:lambd_mu} simplifies to
\begin{align*}
\lambda\left({i\omega}\right)=
        \frac{\omega^2-i\omega h_0-k_0}{i\omega h^\tau+k^\tau}e^{i\omega\tau}.
\end{align*}
Imposing the self-intersection condition $\lambda(i\omega)=\overline{\lambda(i\omega)}$ yields the requirement that $\lambda(i\omega)$ is real, i.e., 
\begin{align*}
\Im[\lambda(i\omega)]=
        \frac{\left(-k_0k^\tau+\omega^2\left(k^\tau-h_0h^\tau\right)\right)\sin(\omega\tau)}{(\omega h^\tau)^2+(k^\tau)^2}
        -\frac{\omega \left(h_0 k^\tau+ h^\tau\left(\omega^2-k_0\right)\right)\cos(\omega\tau)}{(\omega h^\tau)^2+(k^\tau)^2}=0,
\end{align*}
from which we obtain
\begin{align*}  
\tan(\omega\tau)=
        \frac{\omega \left(h_0 k^\tau+h^\tau\left(\omega^2-k_0\right)\right)}{k^\tau\left(\omega^2-k_0\right)-\omega^2h_0h^\tau}.
\end{align*}
Defining $\Omega=\omega\tau$ and expanding in $1/\tau$ we arrive at
\begin{align}
\label{eq:tangent_Omega}
\tan \Omega=
        \frac{\Omega}{\tau}\left(\frac{h^\tau}{k^\tau}-\frac{h_0}{k_0}\right)+\mathcal{O}\left(\frac{1}{\tau^3}\right),
\end{align}
which allows us to approximate the solutions using an asymptotic expansion. 
Let $\Omega=\Omega_{0j}+\frac{1}{\tau}\tilde{\Omega}_j$, where $\Omega_{0j}=j\pi$ is the leading-order approximation. Substituting it into Eq.~\eqref{eq:tangent_Omega} gives
\begin{align*}
\tan\left(\Omega_{0j}+\frac{1}{\tau}\tilde{\Omega}_j\right)=
        \frac{\Omega_{0j}+\frac{1}{\tau}\tilde{\Omega}_j}{\tau}\left(\frac{h^\tau}{k^\tau}-\frac{h_0}{k_0}\right)+\mathcal{O}\left(\frac{1}{\tau^3}\right),~j\in\mathbb{Z}.
\end{align*}
Taking into account the periodicity of the tangent, we can solve for $\omega_j=\frac{\Omega}{\tau}$, yielding
\begin{align*}
\omega_j=
        \frac{j\pi}{\tau}\left[1+\frac{1}{\tau}\left(\frac{h^\tau}{k^\tau}-\frac{h_0}{k_0}\right)\right]+\mathcal{O}\left(\frac{1}{\tau^3}\right).
\end{align*}
The proof of the statement (ii) is complete.

We further compute the slope angle $\alpha_j$ of the bifurcation curve near the intersection point  $\omega_j$. 
This requires evaluating the derivative of $\lambda\left(i\omega\right)$ with respect to $\omega$ at $\omega_j$:
\begin{align*}
\left.\frac{\partial\lambda(i\omega)}{\partial\omega}\right|_{\omega=\omega_j}=&
        \left[\frac{2\left(\frac{\pi j}{\tau}\right)k^\tau-i\left(h_0k^\tau+k_0h^\tau\right)}{2i\left(\frac{\pi j}{\tau}\right) h^\tau k^\tau+\left(k^\tau\right)^2}\right](-1)^j
    \\&
        +\left[i\tau\frac{-i\left(\frac{\pi j}{\tau}\right)h_0-k_0}{i\left(\frac{\pi j}{\tau}\right)h^\tau+k^\tau}\right](-1)^j+\mathcal{O}\left(\frac{1}{\tau^2}\right)
    \\=&
        (-1)^j\left[i\frac{-k^\tau h_0+k_0 h^\tau}{\left(k^\tau\right)^2}+\frac{1}{\tau}\frac{2\pi j\left(\left(k^\tau\right)^2-k^\tau h^\tau+\left(h^\tau\right)^2k_0\right)}{\left(k^\tau\right)^3}\right.\\&\quad\quad\quad\quad\left.-i\tau\frac{k_0}{k^\tau}+\frac{\pi j\left(h_0k^\tau-k_0h^\tau\right)}{\left(k^\tau\right)^2}\right]+\mathcal{O}\left(\frac{1}{\tau^2}\right).
\end{align*}
This derivative has the following form:
\begin{align*}
\left.\frac{\partial\lambda(i\omega)}{\partial\omega}\right|_{\omega=\omega_j}=
        (-1)^j\left[i\left(A_{-1}\tau+A_0\right)+B_0+\frac{B_1}{\tau}\right]+\mathcal{O}\left(\frac{1}{\tau^2}\right),
\end{align*}
where  
\begin{align*}
    &A_{-1}=
        -\frac{k_0}{k^\tau},\quad
    A_0=
        \frac{-k^\tau h_0+k_0 h^\tau}{\left(k^\tau\right)^2}; \\
    &B_0=
        \frac{\pi j\left(h_0k^\tau-k_0h^\tau\right)}{\left(k^\tau\right)^2},\quad
    B_1=
        \frac{2\pi j\left(\left(k^\tau\right)^2-k^\tau h^\tau+\left(h^\tau\right)^2k_0\right)}{\left(k^\tau\right)^3}.
\end{align*}
Then, the angle $\alpha_j$ is given by
\begin{align*}
\cot(\alpha_j)=&
        \frac{B_0}{A_{-1}\tau}+\mathcal{O}\left(\frac{1}{\tau^2}\right).
\end{align*}
Substituting the expressions for $A_{-1}$ and $B_0$, we find
\begin{align*}
\cot(\alpha_j)=
        j\pi \left(\frac{h^\tau}{k^\tau}-\frac{h_0}{k_0}\right)\frac{1}{\tau}+\mathcal{O}\left(\frac{1}{\tau^2}\right),
\end{align*}
and hence,
\begin{align*}
\alpha_j=
        \frac{\pi}{2}+j\pi \left(\frac{h^\tau}{k^\tau}-\frac{h_0}{k_0}\right)\frac{1}{\tau}+\mathcal{O}\left(\frac{1}{\tau^2}\right).
\end{align*}
Because the bifurcation curve is symmetric with respect to the imaginary axis, the offset angle $\theta_j$ at the self-intersection points is given by
\begin{align*}
\theta_j=
        2\alpha_j=\pi+2j\pi \left(\frac{h^\tau}{k^\tau}-\frac{h_0}{k_0}\right)\frac{1}{\tau}+\mathcal{O}\left(\frac{1}{\tau^2}\right),~j\in\mathbb{Z}.
\end{align*}
This completes the proof of the statement (iii).

The bifurcation curves $\lambda(i\omega)$ determine the stability boundary where $\Lambda_{\max} = 0$. 
Properties (i))--(iii) imply that these curves encircle the origin. 
Furthermore the spectrum of all characteristic roots $\mu$ for $\lambda = 0$ is equal to the spectrum of $M$ and it is stable under the assumptions of the theorem. 
Therefore, the stable region is necessarily confined to the connected component containing the origin. 
This confirms that statement (iv) holds.
\end{proof}
\begin{remark}
According to Theorem~\ref{Thm:Rotation_Symmetry}, the stability region of the MSF is asymptotically circular for large-delay coupling. 
While this general result was announced in \cite{flunkert2010synchronizing}, a rigorous proof is given here for the first time, albeit for a particular class of delay-coupled systems.
\end{remark}

Figure~\ref{Fig:MSF_differ_delay} illustrates the key properties of the bifurcation curve $\lambda(i\omega)$ as discussed in  Theorem~\ref{Thm:Rotation_Symmetry} in the complex $\lambda$-plane for various time delays. 
Figures~\ref{Fig:MSF_differ_delay}(a) and~\ref{Fig:MSF_differ_delay}(b) focus on the case $\tau = 3$, (a) zooms in on the stable region and the first self-intersection in (b). 
Figure~\ref{Fig:MSF_differ_delay}(a) highlights the curve Eq.~\eqref{Eq:eq:lambd_mu}  (solid line),  the asymptotic curve Eq.~\eqref{Eq:Lambda_beta} (dashed line),  the starting point $\lambda_0$, the first self-intersection point $\lambda_1$, tangents (arrowed line), and offset angle $\theta_1$, as calculated from Theorem~\ref{Thm:Rotation_Symmetry}.
Figure~\ref{Fig:MSF_differ_delay}(b) displays a larger piece of the bifurcation curve when $\tau=3$, emphasizing multiple self-intersection points $\lambda_j$ and the corresponding tangent directions and offset angles $\theta_j$. 
Figure \ref{Fig:MSF_differ_delay}(c) shows the bifurcation curves for different time delays $\tau$. 
For $\tau = 1$, the curve forms a teardrop shape and has no rotational symmetry. 
As $\tau$ increases, for instance, at $\tau = 5$, the curve starts to become circular. 
When $\tau\to\infty$, the curve converges to a circle around the origin.

\subsection{Numerical simulation examples with three agents}\label{Subsec_Numer_Coupled}

To demonstrate how coupling can stabilize an otherwise unstable agent formation, we performed numerical simulations of a three-agent coupled formation system.
Figure~\ref{Fig:Coupled_Motion} presents the motion trajectories of agents, the time evolution of the motion error, and the asymptotic continuous spectrum of the error system.  
\begin{figure}
    \centering
\includegraphics[width=\textwidth]{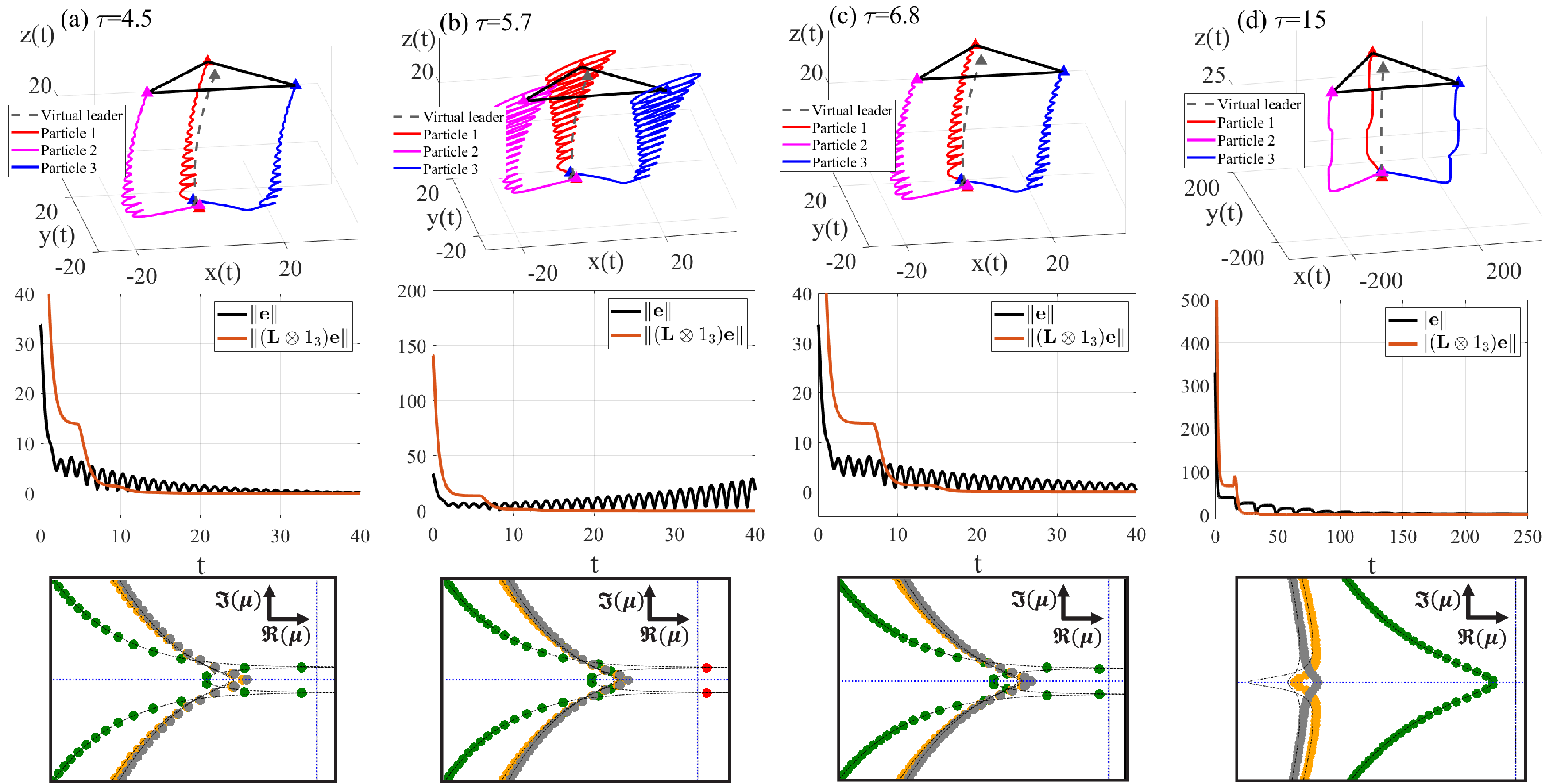}
    \caption{Illustration of the active agents formation with coupling between agents described by the Laplacian matrix \eqref{Eq:L_coupling}. 
    Time delay is fixed: (a) $\tau=4.5$, (b) $\tau=5.7$, (c) $\tau=6.8$, and (d) $\tau=15$. 
    The parameters are fixed as follows: (a)--(c) $k_0=6,~h_0=0,~k_0^\tau=0.3,~h_0^\tau=0$ and $k=3,~h=3,~k^\tau=-0.5,~h^\tau= 0$; (d)  $k_0=2,~h_0=3,~k_0^\tau=1.5,~h_0^\tau=1.2$ and $k=3,~h=3,~k^\tau=-0.5,~h^\tau=0$.  
    \textbf{Top row}: 
        the motion trajectories (red, pink, and blue curves) of the agents (solid triangles), which are given by Eqs.~\eqref{eq:Motion_AP_a}--\eqref{eq:Motion_AP_b} with the control input \eqref{eq:ControlInput2};  the gray dashed lines denote the trajectories (see Eq.~\eqref{Eq:target_trajectory}) of the virtual leader.
    \textbf{Middle row}: 
        the system errors, including the tracking error $\|\mathbf{e}\|$  (black lines, as defined in Eqs.~\eqref{eq:DDE-e}--\eqref{eq:DDE-xi}), and the formation error (orange lines) $\|(\mathbf{L}\otimes 1_3)\mathbf{e}\|$.
    \textbf{Bottom row}: 
        Spectrum of the error system, containing three branches for eigenvalues $\lambda=4$ (yellow) $\lambda=0$ (green and red) $\lambda=5$ (gray) of the Laplacian matrix \eqref{Eq:L_coupling}. 
\label{Fig:Coupled_Motion}}
\end{figure}
The motion of agents are governed by Eqs.~\eqref{eq:Motion_AP_a}--\eqref{eq:Motion_AP_b} with the control input \eqref{eq:ControlInput2}. 
The error system is defined in Eqs.~\eqref{eq:DDE-e}--\eqref{eq:DDE-xi}, which is evaluated with respect to the target trajectory. 
The target trajectory, the parameters $k_0,h_0,k^\tau_0,h^\tau_0$, and time delays are the same as in Fig.~\ref{Fig:Motion}. 
The coupling parameters in Eq.~\eqref{eq:ControlInput2}  are fixed to $k=3,~h=3,~k^\tau=-0.5,~\text{and}~h^\tau=0$. 
The coupling structure is determined by the following Laplacian matrix:
\begin{align}
\label{Eq:L_coupling}
\mathbf{L}=
    \left[\begin{array}{ccc}
        3 & -2 &-1  \\
        -2 &3 &-1\\
        -2&-1&3
    \end{array}\right].
\end{align} 
The asymptotic continuous spectrum of the error system is described by \eqref{eq:ACS-complex}.
    
The simulations followed a setup similar to the uncoupled case (see Fig.~\ref{Fig:Motion}). 
This allows for a direct comparison of agent formation behavior before and after the introduction of coupling.
We observe that a suitable coupling improves the formation stability in two key aspects:
(i) the agents converge more reliably to the desired formation configuration;
(ii) the amplitude of delay-induced oscillations in their trajectories is significantly reduced.

In Fig.~\ref{Fig:Coupled_Motion}, we show the stabilizing effect of inter-agent coupling, which significantly enhances the resilience to a communication delay of agents.
Compared to Fig.~\ref{Fig:Motion}, coupling can reduce delay-induced oscillations and form a more robust formation configuration.
After introducing the coupling, the trajectories of agents remain following the virtual leader when delays are $\tau = 4.5, 6.8$, and $15$, see Figs.~\ref{Fig:Coupled_Motion}(a), (c), and (d). 
And agents can achieve the desired formation configuration for all delays, see Figs.~\ref{Fig:Coupled_Motion}(a)--(d).   
The middle row confirms the above coordinated behavior through the bounded decay of errors. 
The tracking error $\|\mathbf{e}(t)\|$ (black lines) has a significant reduction compared to Fig.~\ref{Fig:Motion}, i.e., the oscillations caused by the delay are significantly attenuated.
The formation error $\|(\mathbf{L} \otimes 1_3)\mathbf{e}(t)\|$ (shown by the orange lines) represents the difference in the relative positions of the active agents. When this error asymptotically converges to $0$, it indicates that the desired formation shape can be achieved, regardless of the delays.  
The bottom row shows the spectrum plots of the error system, reflecting the stability corresponding to the eigenvalues $\lambda=4,0,5$ of the coupling matrix $\mathbf{L}$ \eqref{Eq:L_coupling}. 

\section{Conclusions}\label{sec:Conclusions}
In summary, this study provides a comprehensive analysis of how time-delayed interactions affect the stability of motion and formation in active agent systems.
We consider a general linear model incorporating delay effects into the dynamics, guided by a virtual leader. 
To quantify deviations in position and velocity of motion, we introduce error variables, which result in a high-dimensional linear DDE model with inertial effects describing the error dynamics. 
This equation serves as the basis for our stability analysis. 

Our investigation focused on two fundamental scenarios: (1) agent motion driven only by a virtual leader (uncoupled case), and (2) agent motion influenced both by  a virtual leader and mutual interactions among agents (coupled case).  
Furthermore, we applied recent results from \cite{Yanchuk2022b,wang2024universal} to provide explicit parametric conditions for the formation to be stable for all delays (we called it absolutely stable), unstable for all delays, or possess explicitly defined stability domains. For this, we used the classification of the spectra of linear DDEs from \cite{wang2024universal}.
All analytical findings were corroborated by numerical simulations, including a representative example of a pattern formation involving three active agents.

In the coupled case,  we employ the MSF approach to analyze the formation stability of active agents, which is a powerful tool for studying the stability of coupled error systems by decoupling the dynamics into distinct coupling modes.
For the case of complex Laplacian eigenvalues $\lambda$, we assess the stability of agent formation by mapping the largest Lyapunov exponent and identifying stability regions in the $(\Re(\lambda),\Im(\lambda))$-plane, as shown in Fig.~\ref{Fig:MSF_curve}. 
An interesting finding is that, as the interacting delay increases, the stability region gradually forms a circular shape centered at the origin. 
This work provides the first rigorous proof of this circular property, extending the results from \cite{flunkert2010synchronizing}. For finite delays, it has a teardrop shape, the properties of which are described in Theorem~\ref{Thm:Rotation_Symmetry}. 

A key feature of our results is that our theorems provide explicit conditions despite the large number of parameters involved in the control setup. These parameters include the elements of the control matrices $M$, $M^\tau$, $P$ and $P^\tau$, as well as the potentially arbitrary Laplacian matrix $\mathbf{L}$, which describes the interactions.

Our results could be useful for applications involving multiple agents interacting with time delays, such as in robotics, autonomous systems and networked control.
A possible area for future research would be to investigate how adaptive interactions or time-varying delays affect the dynamical system presented here.

\appendix
\section{Parameterized curves for contour lines of the master stability function}\label{Appendix_A}
In Sec.~\ref{Secsub_ContourMap} of the main text, we introduced the contour map of the MSF and identified the stable and unstable regions separated by the bifurcation curve. 
In the following, we provide more detailed parameterized curves for contour lines on the contour map (see Figs.~\ref{Fig:MSF_curve_Appendix}(a)  and (b)) and determine the main stable and unstable contour lines (see Figs.~\ref{Fig:MSF_curve_Appendix}(c) and (d)).
\begin{figure}
\centering	
  \subfigure{\includegraphics[width=6cm]{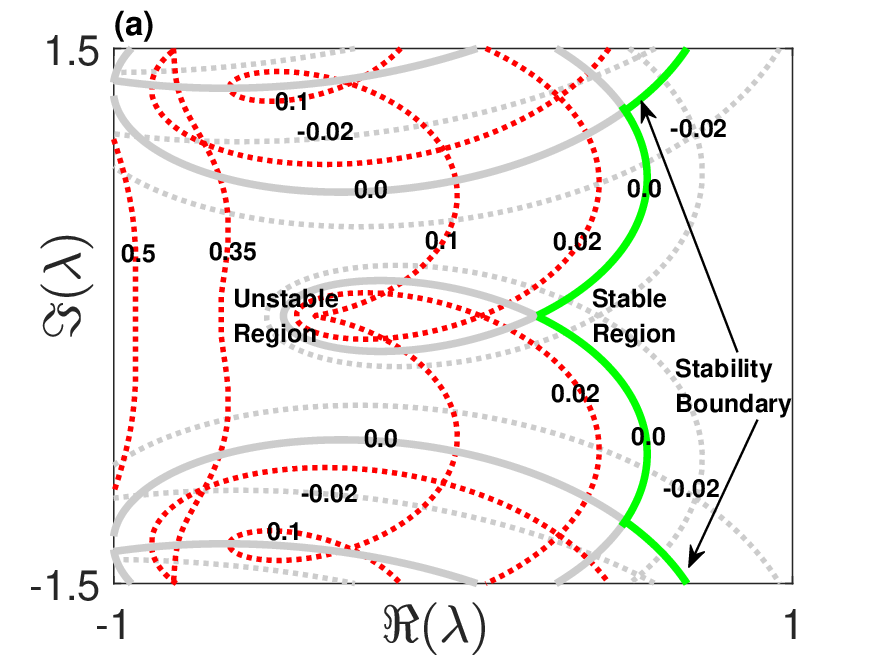}}
  \subfigure{\includegraphics[width=6cm]{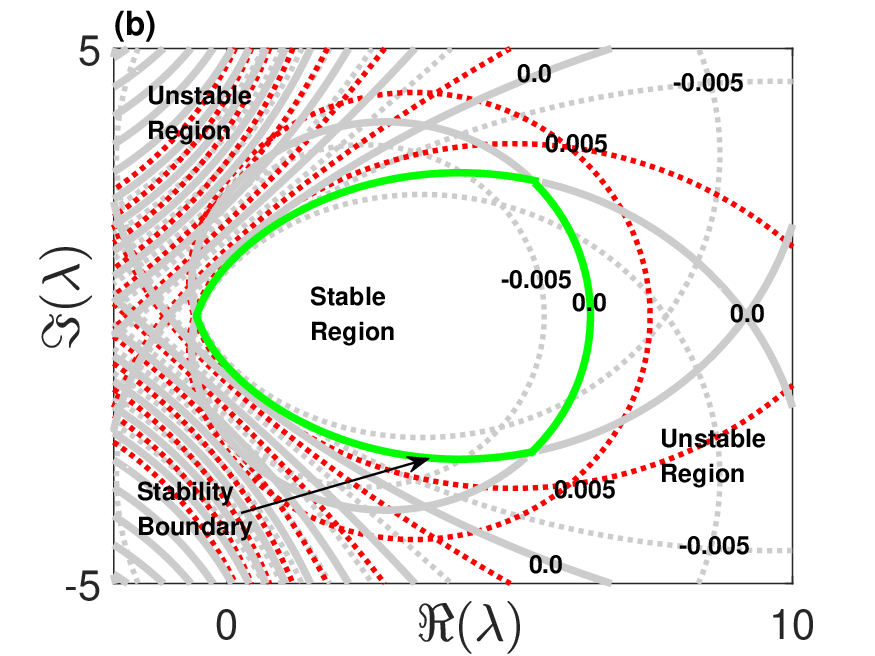}}
  \subfigure{\includegraphics[width=6cm]{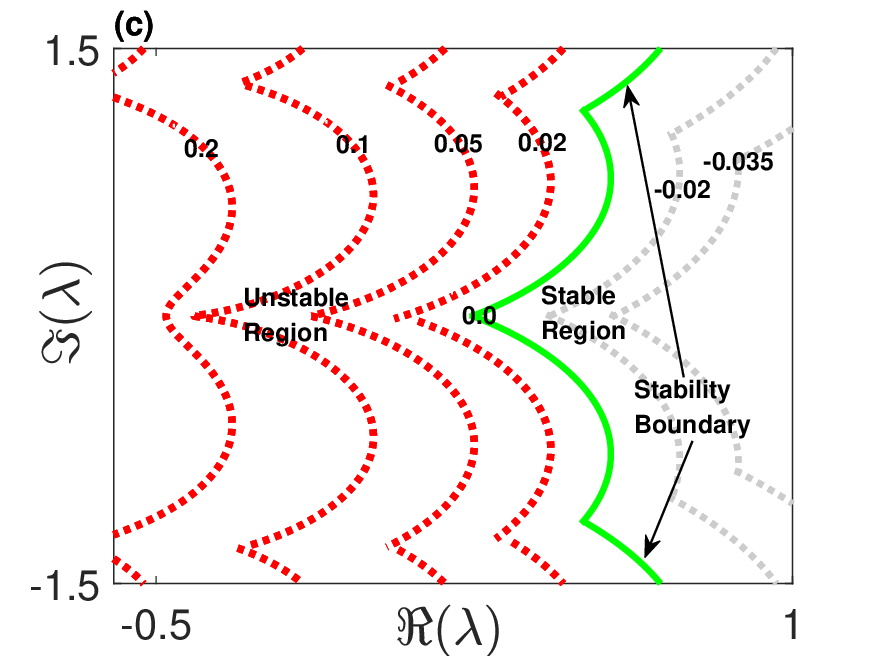}}
  \subfigure{\includegraphics[width=6cm]{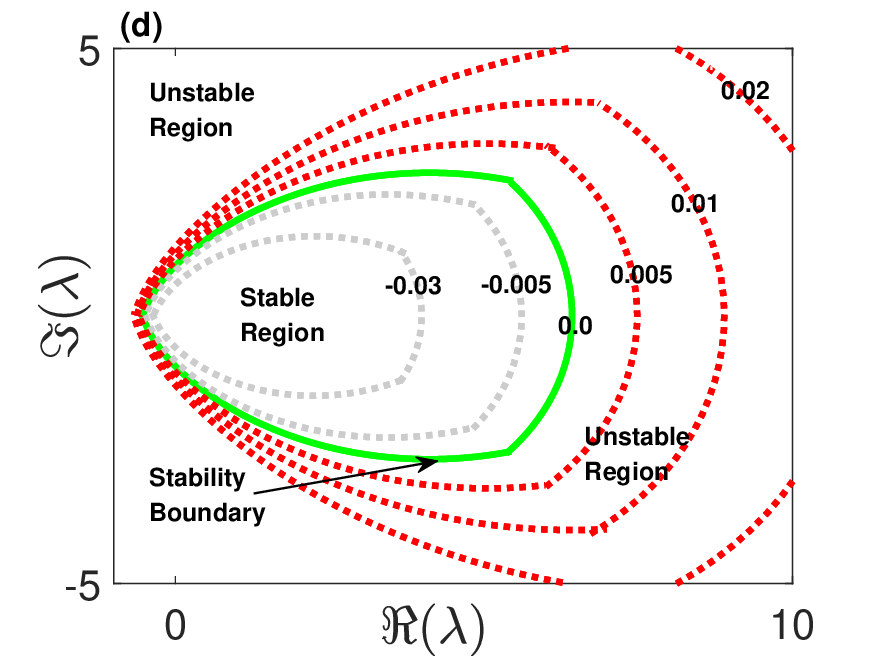}}
    \caption{Contour lines of the MSF, i.e.,  $\Lambda_{\max}=\max ( \mathrm{Re}\, \mu )$ for delay-coupled system  \eqref{Eq:eq:Gener_MSF-1-again} with $\tau=10$ and fixed parameters: 
    (a) and (c) $p_0=(0,6,1.5,3)$, $\overline{p}=(3,1,0,0)$;
    (b) and (d) $p_0=(3,6,1.5,3)$, $\overline{p}=(1.3,1,-2,0)$. 
    The curve where $\Lambda_{\max}=0$ is combined green and gray solid lines, where the green solid lines indicate the stability boundary. 
    The gray and red dashed lines correspond to the stable and unstable states, respectively.
\label{Fig:MSF_curve_Appendix} }
\end{figure} 
 
% In Fig.~\ref{Fig:MSF_curve_Appendix}, the time delay is fixed as $\tau = 10$, with the following parameter settings:~(a) and (c) are fixed as $p_0 = (0, 6, 1.5, 3)$, $\overline{p} = (3, 1, 0, 0)$; (b) and (d) are fixed as $p_0 = (3, 6, 1.5, 3)$, $\overline{p} = (1.3, 1, -2, 0)$. The figure shows that varying these parameters leads to different curves on the contour map, resulting in stable regions with different shapes.

% For Fig.~\ref{Fig:MSF_curve_Appendix}(a), the instability crossings appear mainly  to the left side of the green solid line, although there includes some stability curves. 
% As the state of the system is dominated by instability crossings, which determines the unstable region of the system. 
% The right side of the green solid line contains only stable curves,  this constitutes the stable region of the system. From this, we can obtain the  main contour lines,
% see Fig.~\ref{Fig:MSF_curve_Appendix}(c).

% --For Fig.~\ref{Fig:MSF_curve_Appendix}(b), a bounded curve (the green solid line) separates the stable and unstable dynamics. 
% The instability crossings appear mainly outside of the boundary, and inside the boundary, only the stability curves are included. The main contour lines corresponding to stable and unstable states are shown in Fig.~\ref{Fig:MSF_curve_Appendix}(d).

\bibliographystyle{siamplain}
\bibliography{references}
\end{document}